\theoremstyle{plain}
	\newtheorem{thm}{\protect\theoremname}[section]
  \theoremstyle{plain}
  \newtheorem*{cor*}{\protect\corollaryname}
  \newtheorem{lem}[thm]{\protect\lemmaname}
  \newtheorem*{lem*}{\protect\lemmaname}
	\newtheorem{prop}[thm]{\protect\propositionname}
	\newtheorem*{prop*}{\protect\propositionname}
  \theoremstyle{remark}
\DeclareMathOperator{\tr}{Tr}
\DeclareMathOperator{\iid}{Id}
\DeclareMathOperator{\intr}{Int}
\let\orgdescriptionlabel\descriptionlabel
\renewcommand*{\descriptionlabel}[1]{%
  \let\orglabel\label
  \let\label\@gobble
  \phantomsection
  \edef\@currentlabel{#1}%
  \let\label\orglabel
  \orgdescriptionlabel{#1}%
}
	\providecommand{\corollaryname}{Corollary}
  \providecommand{\lemmaname}{Lemma}
  \providecommand{\remarkname}{Remark}
	\providecommand{\theoremname}{Theorem}
\providecommand{\propositionname}{Proposition}
\begin{document}

\title{Non-transversal intersection of free and fixed boundary for fully nonlinear elliptic operators in two dimensions}
\date{}

\author{E. Indrei and A. Minne}

\def\signei{\bigskip\begin{center} {\sc Emanuel Indrei\par\vspace{3mm}Center for Nonlinear Analysis\\  
Carnegie Mellon University\\
Pittsburgh, PA 15213, USA\\
email:} {\tt eindrei@msri.org }
\end{center}}

\def\signam{\bigskip\begin{center} {\sc Andreas Minne \par\vspace{3mm}
Department of Mathematics\\
KTH, Royal Institute of Technology\\
100 44 Stockholm, Sweden\\
email:} {\tt minne@kth.se}
\end{center}}

\maketitle

\begin{abstract}
In the study of classical obstacle problems, it is well known that in many configurations the free boundary intersects the fixed boundary tangentially. The arguments involved in producing results of this type rely on the linear structure of the operator. In this paper we employ a different approach and prove tangential touch of free and fixed boundary in two dimensions for fully nonlinear elliptic operators. Along the way, several $n$-dimensional results of independent interest are obtained such as BMO-estimates, $C^{1,1}$ regularity up to the fixed boundary, and a description of the behavior of blow-up solutions.
\end{abstract}

\makeatletter
\def\blfootnote{\gdef\@thefnmark{}\@footnotetext}
\makeatother


\section{Introduction}

Optimal interior regularity results have recently been obtained for
solutions to fully nonlinear free boundary problems \cite{FS, IM} via methods inspired by \cite{MR2999297}. Under further thickness assumptions, these results imply $C^{1}$
regularity of the free boundary. However, a description of the dynamics on how the free boundaries intersect the fixed boundary has remained an open problem for at least a decade in the fully nonlinear setting (although partial results have been obtained in \cite{MR2065018} under strong density and growth assumptions involving the solutions and a homogeneity assumption on the operator). On the other hand, extensive work has been carried out to investigate this question for the classical problem
\begin{equation}  \label{eq:classical}
\begin{cases}
\Delta u=\chi_{u > 0} & \text{in }B_1\cap \{x_n>0\},\\
u=0 & \text{on }\{x_n=0\},\
\end{cases}
\end{equation}
and its variations \cite{MR1359745,MR1950478,MR2180300,MR2237208,MR2281197}. The conclusions have varied as a function of the boundary data, but in the homogeneous case it has been shown that the free boundary touches the fixed boundary tangentially. Dynamics of this type have also been the object of study in the classical dam problem \cite{MR597551,MR693780} which is a mathematical model describing the filtration of water through a porous medium split into a wet and dry part via a free boundary.     

The methods utilized in establishing the above-mentioned results strongly rely on the linear structure of the operator, e.g. in arguments involving Green's functions and monotonicity formulas. In particular, the Alt-Caffarelli-Friedman and Weiss monotonicity formulas are frequently applied: tools only available in the setting of linear operators in divergence form, see \cite[Chapter 8]{MR2962060}. Thus the tangential touch problem for fully nonlinear operators requires a different approach.  

In this article we prove non-transversal intersection of free and fixed boundary in two dimensions for a broad class of fully nonlinear elliptic free boundary problems.  
More precisely, consider the following problem
\begin{equation}  \label{me}
\begin{cases}
F(D^{2}u)=1 & \text{a.e. in }B_{1}^{+}\cap\Omega,\\
|D^{2}u|\le K & \text{a.e. in }B_{1}^{+}\backslash\Omega,\\
u=0 & \text{on }B'_{1},
\end{cases}
\end{equation}
where $\Omega \subset B_1^+$ is open, $K>0$, $F$ is $C^1$, and satisfies standard structural assumptions (see \S \ref{sec:C11}). We assume solutions $u$ to be in $W^{2,p}(B_{1}^+)$ for any $1< p <\infty$. A heuristic description of our strategy is as follows: we consider $$M:= \limsup_{|x| \rightarrow 0} \frac{1}{x_n} \displaystyle \sup_{e \in \mathbb{S}^{n-2} \cap e_n^{\perp}} \partial_e u(x).$$ By extending interior $C^{1,1}$ results (see \S \ref{sec:C11}), it follows that $M$ is finite and we extract information on the nature of blow-up solutions by considering possible values for $M$. In particular, we show that either all blow-ups are of the form $bx_n^2$ if $M=0$, or there is a sequence producing a blow-up having the form $ax_1x_n+bx_n^2$ if $M \neq 0$ (Theorem \ref{alt}). 

We then show that in $\mathbb{R}_+^2$, if $ax_1x_n+bx_n^2$ is a blow-up solution, then $\partial (\intr\{u=0\})$ stays away from the origin (Lemma \ref{lem:uniquenessofblowups2D}) and this enables us to prove that blow-ups at the origin are unique (Theorem \ref{lem:uniquenessofblowups2Dexplicit}). Thereafter, a standard argument readily yields non-transversal intersection of the free and fixed boundary at contact points (Theorem \ref{tt}).

The rest of the paper is organized as follows: in \S \ref{not} we set up the problem and discuss relevant notation; \S \ref{sec:main} is the core of the paper where we rigorously develop the heuristics described above; \S \ref{sec:C11} is devoted to the $C^{1,1}$ regularity up to the boundary of solutions, which follows as in \cite{IM} once a suitable BMO result is established. The results of \S \ref{sec:C11} are used in \S \ref{sec:main}. We have chosen to reverse the logical ordering of these sections in order to make the tangential touch section more accessible.

\paragraph{Acknowledgements}  We thank Henrik Shahgholian for bringing this problem to our attention. Moreover, we thank John Andersson for his help: the technique developed in this paper evolved through our interaction with him. We also thank the Magnuson Foundation for supporting this work. E. Indrei acknowledges partial support from NSF Grants OISE-0967140 (PIRE), DMS-0405343, and DMS-0635983 administered by the Center for Nonlinear Analysis at Carnegie Mellon University. Lastly, the excellent research environment provided by KTH, CMU, HIM, and Universit\"at Bonn is kindly acknowldeged. 

\subsection{Setup and Notation} \label{not}
We study fully nonlinear elliptic partial differential equations of the form
\begin{equation}
\begin{cases}
F(D^{2}u,x)=f(x) & \text{a.e. in }B_{1}^{+}\cap\Omega,\\
|D^{2}u|\le K & \text{a.e. in }B_{1}^{+}\backslash\Omega,\\
u=0 & \text{on }B'_{1},
\end{cases}\label{eq:main}
\end{equation}

\noindent where $\Omega\subseteq B_{1}^{+}$ is an open set, $B_{1}(x)=\{x\in\mathbb{R}^{n}:|x|<1\}$,
$B_{r}^{+}(x)=B_{r}(x)\cap\{x_{n}>0\}$, $B'_{r}(x)=B_{r}(x)\cap\{x_{n}=0\}$, and $B_{r}=B_{r}(0)$. Furthermore, $F$ is assumed to satisfy the following structural conditions.
\begin{description}
\item [{(H1)\label{hyp:0@0}}] $F(0,x)\equiv0$.
\item [{(H2)\label{hyp:uniformellipticity}}]$F$ is uniformly
elliptic with ellipticity constants $\lambda_{0}$, $\lambda_{1}>0$
such that
\[
\mathcal{P}^{-}(M-N)\le F(M,x)-F(N,x)\le\mathcal{P}^{+}(M-N),\qquad\forall x\in B_{1}^{+}
\]
where $M$ and $N$ are symmetric matrices and $\mathcal{P}^{\pm}$
are the Pucci operators
\[
\mathcal{P}^{-}(M):=\inf_{\lambda_{0}\iid\le N\le\lambda_{1}\iid}\tr NM,\qquad\mathcal{P}^{+}(M):=\sup_{\lambda_{0}\iid\le N\le\lambda_{1}\iid}\tr NM.
\]

\item [{(H3)\label{hyp:concavity}}] $F(\cdot, x)$ is concave or convex for all $x\in B_{1}^{+}$.
\item [{(H4)\label{hyp:Freg}}] 
\begin{equation}
|F(M,x)-F(M,y)|\le\overline{C}(|M|+1)|x-y|^{\bar{\alpha}}\label{eq:H4}
\end{equation}
for some $\bar{\alpha}\in(0,1]$ and $x$, $y\in B_{1}^+$.
\end{description}
Moreover, let 
\[
\beta(x,x^0):=\sup_{M\in\mathcal{S}}\frac{|F(M,x)-F(M,x^0)|}{|M|+1}
\]
where $\mathcal{S}$ is the space of $n\times n$ symmetric real valued
matrices. 

\paragraph{Notation} Points in $\mathbb{R}^n$ are generally denoted by $x$, $x^0$, $y$ etc. while subscripts are used for components, i.e. $x=(x_1,\ldots,x_n)$, scalar sequences, and functions. The notation $x'$ is used for $(n-1)$-dimensional vectors. Similarly,
$\nabla$ and $\nabla'$ will be used, respectively, for the gradient and the gradient with respect to the first $n-1$ variables.
\begin{align*}
  \mathbb{R}_+^n&\quad \text{is the upper half space } \{x\in \mathbb{R}^n\,:\,x_n>0\};\\
	\Omega &\quad \text{is an open set in }\mathbb{R}_+^n;\\
	\Gamma&\quad \text{is the set }\mathbb{R}_+^n\cap\partial \Omega;\\
	\Gamma_i&\quad \text{is the set }\mathbb{R}_+^n\cap\partial \intr \{u=0\};\\
	B_r(x^0) &\quad \text{is the open ball }\{x\in \mathbb{R}^n\,:\,|x-x^0|<r\};\\
	B_r^+(x^0) &\quad \text{is the truncated open ball }\{x\in \mathbb{R}^n\,:\,|x-x^0|<r,\,x_n>0\};\\
	B_r'&\quad \text{is the ball }\{x'\in \mathbb{R}^{n-1}\,:\,|x'|<r\};\\
	\mathbb{S}^{n-1}&\quad \text{is the $(n-1)$-sphere }\{x\in \mathbb{R}^{n}\,:\,|x|=1\};\\
	e^\perp&\quad \text{is the vector space orthogonal to $e\in \mathbb{S}^{n-1}$};\\
	C^{k,\alpha}(\Omega)&\quad \text{denotes the usual Hölder space;}\\
	C_{\text{loc}}^{k,\alpha}(\Omega)&\quad \text{denotes the local Hölder space;}\\
	W^{k,p}(\Omega)&\quad \text{denotes the usual Sobolev space.}
\end{align*}
The term "blow-ups of $u$ at $x^0$" will be used for limits of the form $\displaystyle \lim_{j\to \infty}\frac{u(x^0+r_j x)}{r_j^2}$, where $r_j$ is a sequence such that $r_j \rightarrow 0^+$ as $j \rightarrow \infty$; $\intr \{u=0\}=\{u=0\}^\circ$ means the interior of the set $\{u=0\}:=\{x\in \mathbb{R}_+^n\,:\,u(x)=0\}$. Finally, $S(\varphi)$ denotes the space of viscosity solutions corresponding to $\varphi$ and the ellipticity constants $\lambda_0$ and $\lambda_1$ in \hyperref[hyp:uniformellipticity]{(H2)}, see \cite{MR1351007}.

\section{Main Result}\label{sec:main}

Our first result gives a natural dichotomy of blow-ups of solutions to \eqref{me} in any dimension.
\begin{thm}[Blow-up Alternative] \label{alt}
Let $u$ be a solution to \eqref{me} and suppose $\{\nabla u \neq 0\} \cap \{x_n>0\} \subset \Omega$, $0 \in \overline{\{u \neq 0\}}$, and $\nabla u(0)=0$. Then exactly one of the following holds: 
\begin{description}[leftmargin=0pt]
\item[(i)\label{alt(i)}]  All blow-ups of $u$ at the origin are of the form $u_0(x)=b x_n^2$ for some $b >0$;\\
\item[(ii)\label{alt(ii)}] There exists a blow-up of $u$ at the origin of the form
\[
u_0(x)=ax_1x_n+ b x_n^2,
\]
for $a \neq 0$, $b \in \mathbb{R}$.
\end{description}
\end{thm}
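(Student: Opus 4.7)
The plan is to analyze the quantity $M$ via compactness of blow-ups and split the argument into the cases $M=0$ and $M>0$. First, the $C^{1,1}$-regularity up to the fixed boundary proved in \S\ref{sec:C11} gives $\|D^{2}u_{r}\|_{L^{\infty}(B_{R}^{+})}\le C(R)$ for the rescalings $u_{r}(y):=r^{-2}u(ry)$, uniformly in small $r$. Hence any sequence $r_{j}\to 0^{+}$ admits a subsequence with $u_{r_{j}}\to u_{0}$ in $C^{1,\alpha}_{\mathrm{loc}}(\overline{\mathbb{R}_+^n})$, where $u_{0}\in C^{1,1}_{\mathrm{loc}}$ vanishes on $\{x_{n}=0\}$, satisfies $u_{0}(0)=0=\nabla u_{0}(0)$, solves $F_{0}(D^{2}u_{0})=1$ (with $F_{0}(\cdot):=F(\cdot,0)$) on $\Omega_{0}:=\{\nabla u_{0}\neq 0\}\cap\{x_{n}>0\}$, and has $|D^{2}u_{0}|\le K$ elsewhere. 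Moreover, for any tangential $e\in\mathbb{S}^{n-2}\cap e_{n}^{\perp}$, differentiating $F(D^{2}u)=1$ shows $\partial_{e}u$ satisfies a uniformly elliptic linear equation on $\Omega$ and vanishes on $B'_{1}$; the Hopf lemma then yields $|\partial_{e}u(x)|\le C x_{n}$ near the origin, so $M<\infty$.

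Suppose $M=0$. Then $\partial_{e}u_{r}(y)/y_{n}=\partial_{e}u(ry)/(r y_{n})\to 0$ uniformly on compacts of $\{y_{n}>0\}$, forcing $\nabla'u_{0}\equiv 0$; hence $u_{0}(x)=\psi(x_{n})$ with $\psi\in C^{1,1}$ and $\psi(0)=\psi'(0)=0$. On the open set $\{\psi'\neq 0\}$ the equation $F_{0}(\psi''(x_{n})\, e_{n}\otimes e_{n})=1$ holds, which by \hyperref[hyp:uniformellipticity]{(H2)} pins $\psi''$ to a single constant $2b$. Continuity of $\psi'$ with $\psi'(0)=0$ together with quadratic non-degeneracy at the origin (inherited from $0\in\overline{\{u\neq 0\}}$ by standard ABP-type arguments for $F(D^{2}u)=1$) exclude any partial vanishing of $\psi'$, forcing $\psi(x_{n})=b x_{n}^{2}$. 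The Pucci bounds applied to $F_{0}(2b\, e_{n}\otimes e_{n})=1$ then force $b>0$, completing (i).

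Suppose $M>0$. Choose $x^{j}\to 0$ and tangential $e^{j}$ with $\partial_{e^{j}}u(x^{j})/x^{j}_{n}\to M$; up to subsequences $e^{j}\to e^{*}$, and after a tangential rotation $e^{*}=e_{1}$. Set $r_{j}:=|x^{j}|$ and extract further so that $x^{j}/r_{j}\to\hat x$ with $\hat x_{n}>0$ (the degenerate case $\hat x_{n}=0$ is handled by an analogous boundary-Hopf argument on the normal derivative of $\partial_{1}u$) and $u_{r_{j}}\to u_{0}$ in $C^{1,\alpha}_{\mathrm{loc}}$. The identity $\partial_{e^{j}}u_{r_{j}}(x^{j}/r_{j})/(x^{j}_{n}/r_{j})=\partial_{e^{j}}u(x^{j})/x^{j}_{n}$ yields $\partial_{1}u_{0}(\hat x)/\hat x_{n}=M$, while passing the limsup definition of $M$ to the limit gives $\partial_{e}u_{0}(y)/y_{n}\le M$ for all $y\in\{y_{n}>0\}$ and tangential $e$. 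Set $w:=\partial_{1}u_{0}-Mx_{n}$. Evans-Krylov with \hyperref[hyp:concavity]{(H3)} gives $u_{0}\in C^{2,\alpha}_{\mathrm{loc}}(\Omega_{0})$, and differentiating $F_{0}(D^{2}u_{0})=1$ makes $w$ solve a uniformly elliptic linear equation on $\Omega_{0}$. Since $w\le 0$ on $\{x_{n}>0\}$ and $w(\hat x)=0$, the strong maximum principle forces $w\equiv 0$ on the component of $\Omega_{0}$ containing $\hat x$; continuity of $w$ together with the identity $w(p)=-M p_{n}<0$ at any $p\in\{x_{n}>0\}\setminus\Omega_{0}$ rules out any interior boundary of that component, so $\Omega_{0}=\{x_{n}>0\}$ and $\partial_{1}u_{0}\equiv Mx_{n}$. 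Combined with $|\nabla'u_{0}|^{2}\le M^{2}x_{n}^{2}$, this yields $\partial_{j}u_{0}\equiv 0$ for $j=2,\ldots,n-1$; integrating with $u_{0}(x',0)=0$ gives $u_{0}(x)=Mx_{1}x_{n}+\psi(x_{n})$. Since $F_{0}(D^{2}u_{0})=1$ on all of $\{x_{n}>0\}$ and $D^{2}u_{0}$ depends on $x_{n}$ only through $\psi''(x_{n})$, ellipticity forces $\psi''\equiv 2b$, yielding $u_{0}=Mx_{1}x_{n}+bx_{n}^{2}$ with $a=M\neq 0$, completing (ii).

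The principal difficulty lies in the strong-maximum-principle step in case $M>0$: one must simultaneously ensure the attainment point $\hat x$ is interior (resolved by subsequence extraction, or by a boundary-Hopf variant in the degenerate case), identify a set on which $w$ satisfies a linear elliptic equation, and propagate $w\equiv 0$ across the a priori unknown structure of $\Omega_{0}$ using the sign constraint and continuity. The case $M=0$ is technically simpler but similarly depends on ruling out pathological structures of $\psi$, which follows from non-degeneracy combined with $C^{1,1}$-continuity of $\psi'$.
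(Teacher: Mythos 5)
Your overall plan mirrors the paper's: define $M$, finiteness via $C^{1,1}$, split into $M=0$ and $M>0$, and in the latter case show $\partial_1 u_0\equiv Mx_n$ via the strong maximum principle applied to $w=\partial_1 u_0 - Mx_n$. Your endgame of the $M>0$ case is a mild streamlining of the paper's: you deduce $\partial_j u_0\equiv 0$ for $j\ge 2$ directly from $|\nabla' u_0|\le Mx_n$ together with $\partial_1 u_0 = Mx_n$, then pin $\psi''$ by ellipticity, whereas the paper integrates and runs a Liouville-type argument via boundary $C^{2,\alpha}$ estimates. Either works. Also, in the subcase $\hat x_n>0$, your implicit assumption $\hat x\in\Omega_0$ is in fact justified: if $\hat x_n>0$ and $\hat x\notin\Omega_0$ then $\nabla u_0(\hat x)=0$, so $w(\hat x)=-M\hat x_n<0$, contradicting $w(\hat x)=0$.

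The genuine gap is the sentence "extract further so that $x^j/r_j\to\hat x$ with $\hat x_n>0$ (the degenerate case $\hat x_n=0$ is handled by an analogous boundary-Hopf argument on the normal derivative of $\partial_1 u$)." You cannot force $\hat x_n>0$ by choosing a subsequence; the limit point of $x^j/r_j$ on $\mathbb{S}^{n-1}\cap\{x_n\ge 0\}$ is whatever it is, and $\hat x_n=0$ is a genuine possibility. When $\hat x_n=0$, the identity $\partial_1 u_0(\hat x)=M\hat x_n$ degenerates to $0=0$ and carries no information, so there is no interior equality point from which to propagate via the strong maximum principle. This is precisely the paper's Case 4, and it requires a separate, nontrivial argument: one assumes $w<0$ somewhere, builds a smooth cutoff $\phi\ge 0$ supported away from $\{x_n=0\}$ with $Mx_n-\phi\ge v$ and $Mx_n-\phi>0$, solves $Lh=0$ in a half-ball $B_r^+(\hat x)$ with boundary data $\phi$, invokes the boundary Harnack comparison principle to get $h\ge cx_n$ near $\hat x$, and then feeds this back into the $\limsup$ defining $M$ to obtain $M\le M-c$, a contradiction. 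Dismissing this as an "analogous boundary-Hopf argument" in a parenthetical seriously understates the work: one must select the domain and barrier, check the hypotheses of boundary Harnack for the measurable-coefficient operator $L=F_{ij}(D^2u_0)\partial_{ij}$, and connect the resulting linear lower bound on $h$ with the specific sequence $y^j$ realizing $M$. Without this case being carried out, your proof of alternative~\hyperref[alt(ii)]{(ii)} is incomplete.
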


\begin{proof} 
Firstly, since $u(x',0)=0$, it follows that $\partial_{x_i} u (x',0) = 0$ for all $i\in \{1,\ldots,n-1\}$. By $C^{1,1}$ regularity (Theorem \ref{thm:C11}), there is a constant $C>0$ such that $$\bigg | \frac{1}{x_n} \partial_{x_i} u(x',x_n)\bigg| =\bigg | \frac{1}{x_n} \big( \partial_{x_i} u(x',x_n) - \partial_{x_i} u(x',0)\big) \bigg| \le C,\qquad x_n>0.$$ Define
\[ 
M:= \limsup_{\begin{subarray}{c} |x| \to 0 \\ x_n>0\end{subarray}} \frac{1}{x_n} \displaystyle \sup_{e \in \mathbb{S}^{n-2} \cap e_n^{\perp}} \partial_e u(x).
\]
In particular, $0\le M \le C<\infty$ and there exists a sequence $x^j \rightarrow 0$ with $x_n^j>0$ and directions $e_{x^j} \in \mathbb{S}^{n-2}$ such that $$\lim_{j \rightarrow \infty} \frac{1}{x_n^j} \displaystyle \partial_{e_{x^j}} u(x^j)=M.$$ Moreover, there exists $e \in \mathbb{S}^{n-2}$ such that  (up to a subsequence) $e_{x^j} \rightarrow e.$ Next note 
\begin{align*}
\bigg |\frac{1}{x_n^j} \nabla' u(x^j) \cdot e - M \bigg| &\le \bigg |\frac{1}{x_n^j} \nabla' u(x^j) \cdot (e-e_{x^j})\bigg |+\bigg |\frac{1}{x_n^j} \nabla' u(x^j) \cdot e_{x^j} - M \bigg|\\
& \le C|e-e_{x^j}| + \bigg |\frac{1}{x_n^j} \nabla' u(x^j) \cdot e_{x^j} - M\bigg| \rightarrow 0,
\end{align*}
as $j \rightarrow \infty$. Thus, up to a rotation, 
$$\lim_{j \rightarrow \infty} \frac{1}{x_n^j} \displaystyle \partial_{x_1} u(x^j)=M.$$ 
Now consider a sequence $\{s_j\}$ such that $s_j \rightarrow 0^+$ and the corresponding blow-up procedure so that $$u_j(x):=\frac{u(s_j x)}{s_j^2} \rightarrow u_0(x)$$ in $C_{\text{loc}}^{1,\alpha}(\mathbb{R}_+^n)$ for any $\alpha \in [0,1)$, and $u_0$ satisfies 
\begin{equation} \label{m2}
\begin{cases}
F(D^{2}u_0)=1 & \text{a.e. in }\mathbb{R}_+^n\cap\Omega_0,\\
|\nabla u_0|=0 & \text{in }\mathbb{R}_+^n\backslash\Omega_0,\\
u=0 & \text{on }\mathbb{R}_+^{n-1},
\end{cases}
\end{equation}
where $\Omega_0 =\{\nabla u_0 \neq 0 \} \cap \{x_n>0\}$ (via non-degeneracy). The definition of $M$ implies 
\begin{equation} \label{part}
M\ge \lim_j \bigg| \frac{\partial_{x_i}  u(s_j x)}{s_j x_n}\bigg| = \lim_j \bigg| \frac{\partial_{x_i} u_j(x)}{x_n} \bigg| =\bigg|\frac{\partial_{x_i} u_0(x)}{x_n}\bigg|,
\end{equation}
for all $i \in \{1,\ldots, n-1\}$. In particular, let $v= \partial_{x_1} u_0$ so that in $\mathbb{R}_+^n$, 
\begin{equation} \label{ine}
|v(x)| \le Mx_n.
\end{equation}
If $M=0$, then \eqref{part} implies $\partial_{x_i} u_0 =0$ for all $i \in \{1,\ldots, n-1\}$ so that $u_0(x)=u_0(x_n)$. However, since $u_0(0)=|\nabla u_0(0)|=0$, $0\in \overline{\{u_0\ne 0\}}$ and $u_0$ satisfies \eqref{m2}, the uniform ellipticity of $F$ readily implies
\[
u_0(x)= bx_n^2,
\]
for some $b>0$. This shows that if $M=0$, then any blow-up at the origin is of the form stated in (i). 

Now suppose $M>0$. In order to prove (ii), we cook up a specific blow-up: let $r_j:= |x^j|$ (recall that $\{x^j\}$ is the sequence achieving the $\lim \sup$ in the definition of $M$) so  that as before $$u_j(x):=\frac{u(r_j x)}{r_j^2} \rightarrow u_0(x)$$ in $C_{\text{loc}}^{1,\alpha}(\mathbb{R}_+^n)$ for any $\alpha \in [0,1)$, and $u_0$ satisfies \eqref{m2}, \eqref{part}, and \eqref{ine}. Set $y^j = \frac{x^j}{r_j} \in \mathbb{S}^{n-1} \cap \{x_n >0\}$ and note that along a subsequence, $y^j \rightarrow y \in \mathbb{S}^{n-1}\cap \{x_n\ge 0\}$. Moreover, by the choice of the sequence $\{x^j\}$ and the $C^{1,\alpha}$ convergence of $u_j$ to $u_0$, if $y_n>0$, then 
\begin{equation} \label{lim}
\displaystyle \lim_j \frac{v(y^j)}{y_n^j}= \displaystyle \lim_j \frac{\partial_{x_1} u_j(y^j)}{y_n^j} = \displaystyle \lim_j  \frac{\partial_{x_1} u(x^j)}{x_n^j} = M,
\end{equation} 
so that 
\begin{equation} \label{eq}
v(y)=My_n,
\end{equation}
and note that \eqref{eq} also holds if $y_n=0$.
We consider several possibilities keeping in mind that $M>0$.
\begin{description}[leftmargin=0pt]
\item[Case 1:] If $y \in \Omega_0$, then by differentiating \eqref{m2} we get the elliptic equation
\[
a_{ij} \partial_{ij} (v(x)-Mx_n)=0
\]
for some measurable $a_{ij}$, and by \eqref{ine}, \eqref{eq}, and the maximum principle, it follows that $v(x)=Mx_n$ in the connected component of $\Omega_0$ containing $y$, say $\Omega_0^y$. If there exists $x \in \partial \Omega_0^y \cap \{x_n>0\}$, then $Mx_n=v(x)=0$ so we must have $M=0$, a contradiction. Thus, $v(x)=Mx_n$ in $\mathbb{R}_+^n$ and by integrating,
\[
u_0(x)=Mx_1x_n+h(x_2,\ldots, x_n).
\]
Now, Krylov/Safonov's up to the boundary $C^{2,\alpha}$ estimate (see e.g. Theorem \ref{lem:C2alpha}) applied to $u_0(Rx)/R^2$ yields 
\[
\frac{|D^2u_0(x)-D^2u_0(y)|}{|x-y|^\alpha}\le \frac{C}{R^{\alpha}},\qquad x \neq y\in B_R^+,
\]
and taking $R\to \infty$ implies that $D^2 u_0$ is a constant matrix and thus $h$ is a second order polynomial.
Since $u_0$ vanishes on $\{x_n=0\}$, it follows that 
$$h(x_2,\ldots, x_n)= x_n \sum_{i\neq n} \alpha_i x_i + b x_n^2,$$ and so up to a rotation, $$u_0(x)=a x_1x_n+b x_n^2,$$ with $a$ or  $b\ne 0$.  
\item[Case 2:] If $y \in \partial \Omega_0 \cap \{x_n>0 \}$, then $My_n=v(y)=0$, a contradiction.
\item[Case 3:] If $y \in \overline{\Omega}_0^c$, then for all but finitely many $j$, $y^j \in \Omega_0^c$ and since $\{\nabla u_0 \neq 0 \} \subset \Omega_0$, it follows that $v(y^j)=0$ if $j \ge N$ for some $N \in \mathbb{N}$. However, $y_n^j>0$ and so $0 = \displaystyle \lim_j \frac{v(y^j)}{y_n^j}=M$, a contradiction. 
\item[Case 4:] If $y \in \partial \Omega_0 \cap \{x_n=0\}$, by differentiating \eqref{m2} in $\Omega_0$, it can be seen that for $r>0$ (to be picked later), $v$ satisfies 
\[
Lv=0 \,\,\, \text{in } B_r(y)^+ \cap \Omega_0,\\
\]
where $L=F_{ij}(D^2 u_0) \partial_{ij}$ is elliptic. Since $u_0 \in C^{1,1}(B_r^+(y))$, it follows that the $F_{ij}(D^2 u_0)$ are bounded and measurable on $B_r^+(y)$.

\noindent We know that $Mx_n -v(x)\ge 0$  in $\mathbb{R}^n_+$, and if equality holds everywhere, $u_0(x)=ax_1 x_n + bx_n^2$ just as in Case 1. If there is a point $z$ where strict inequality holds, $Mz_n -v(z)> 0$, we can choose a ball $B_r^+(y)$ so that, by continuity of $v$, $v(x)<Mx_n$ in a neighborhood $B_s(z)$, where $z$ is a boundary point of $B_r^+(y)$. Note that this strict inequality necessarily occurs on $\partial B_r^+(y)\cap \{x_n>0\}$ since both $v$ and $Mx_n$ are zero on the hyperplane $\{x_n=0\}$. Now choose a smooth non-negative (but not identically zero) function $\phi$ supported on $B_s(z)$ small enough such that $Mx_n-\phi(x)\ge v(x)$ in $\mathbb{R}^n_+$ and $Mx_n-\phi(x)>0$ (this can be done since $B_s(z)$ is some distance away from the hyperplane $\{x_n=0\}$). Then if
\begin{equation} \label{m3}
\begin{cases}
Lw=0 & \text{in } B_r^+(y),\\
w=Mx_n-\phi & \text{on } \partial B_r^+(y),\\
\end{cases}
\end{equation}
we have that $w>0$ in $B_r^+(y)$ by the strong maximum principle since $Mx_n-\phi(x)>0$. In particular, $w>v=0$ on $\partial\Omega$, and since $v\le w$ on $\partial B_r^+(y)$, the strong maximum principle again gives $w>v$ in $B_r^+(y)\cap \Omega$. Note also by linearity that $w=Mx_n -h$ where $h$ solves
\begin{equation} \label{m4}
\begin{cases}
Lh=0 & \text{in } B_r^+(y),\\
h=\phi & \text{on } \partial B_r^+(y),\\
\end{cases}
\end{equation}
Once more, the strong maximum principle shows that $h>0$ in $B_r^+(y)$, so the boundary Harnack comparison principle implies that $cx_n\le h(x)$ in $B_{r/2}^+(y)$, where $c>0$ depends on ellipticity constants and $\phi$. With this,
\[
	M=\lim_{j\to \infty} \frac{v(y^j)}{y^j_n}\le \limsup_{\begin{subarray}{c}x_n\to 0^+ \\ x\in B_{r/4}^+(y)\end{subarray}} \frac{w(x)}{x_n}\le \lim_{\begin{subarray}{c}x_n\to 0^+ \\ x\in B_{r/4}^+(y)\end{subarray}} \frac{Mx_n-cx_n}{x_n} = M-c,
\]
a contradiction.
\end{description}
\end{proof}

The next lemma shows that in two dimensions, if \hyperref[alt(ii)]{(ii)} in Theorem \ref{alt} occurs, then $\Gamma_i=\mathbb{R}_+^n\cap\partial \intr \{u=0\}$ stays away from the origin.

\begin{lem}
\label{lem:uniquenessofblowups2D}Let $u$ be a solution to \eqref{me}
with $\Omega = \big(\{u \ne 0\} \cup \{\nabla u \neq 0\} \big) \cap \{x_2>0\} \subset \mathbb{R}_+^2$. If there exists $\{r_j\} \subset \mathbb{R}^+$ such that $r_j \to 0$ as $j \to \infty$ and $$u_{j}(x):=\frac{u(r_jx)}{r_j^2} \to u_{0}(x)=ax_{1}x_2+bx_{2}^{2}$$ in $C_{\text{loc}}^{1,\alpha}(\mathbb{R}_+^n)$ as $j \rightarrow \infty$,
for $a \neq 0$, $b \in \mathbb{R}$, then there exists $\delta \in (0,1)$ such that
$B_{\delta}^{+}\cap \Gamma_i =\emptyset$.
\end{lem}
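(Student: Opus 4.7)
The plan is to argue by contradiction: suppose there exists a sequence $\{z^k\} \subset \Gamma_i$ with $z^k \to 0$. Since $\Gamma_i = \partial \intr\{u = 0\}$, each $z^k$ is a limit of points in $\intr\{u = 0\}$, on which $u$ and $\nabla u$ vanish identically; by the $C^{1,1}$ regularity of Theorem \ref{thm:C11} and continuity, $u(z^k) = 0$ and $\nabla u(z^k) = 0$ for all $k$. The strategy is to combine this degeneracy with the $C_{\text{loc}}^{1,\alpha}$ convergence $u_j \to u_0$ and the explicit structure of $u_0 = a x_1 x_2 + b x_2^2$.

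The key observation is that $\nabla u_0(x) = (a x_2,\, a x_1 + 2 b x_2)$ vanishes only at the origin when $a \neq 0$; a direct computation in polar coordinates yields the quantitative lower bound $|\nabla u_0(x)| \ge \hat c |x|$ on $\overline{\mathbb{R}_+^2}$ for some $\hat c > 0$ depending only on $a$ and $b$. Consequently, on any compact set $K \subset \overline{\mathbb{R}_+^2} \setminus \{0\}$ we have $|\nabla u_0| \ge c_K > 0$, and the $C_{\text{loc}}^{1,\alpha}$ convergence gives $|\nabla u_j| \ge c_K/2$ on $K$ for $j$ sufficiently large. In particular $u_j$ has no critical points on $K$, so $\intr\{u_j = 0\} \cap K = \emptyset$; since $\intr\{u_j = 0\}$ is open, this also forces $\Gamma_i(u_j) \cap K = \emptyset$, and rescaling yields $\Gamma_i \cap r_j K = \emptyset$ for $j$ large.

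Assuming WLOG that $\{r_j\}$ is decreasing, for each $k$ pick $j_k$ maximal with $r_{j_k} \ge |z^k|$, so $z^k/r_{j_k} \in \overline{B_1^+}$. Passing to a subsequence, $z^k/r_{j_k} \to \tilde y \in \overline{B_1^+}$. From $u_{j_k}(z^k/r_{j_k}) = u(z^k)/r_{j_k}^2 = 0$ and $\nabla u_{j_k}(z^k/r_{j_k}) = \nabla u(z^k)/r_{j_k} = 0$, the $C^1$ convergence forces $u_0(\tilde y) = 0$ and $\nabla u_0(\tilde y) = 0$, so $\tilde y = 0$. Equivalently $|z^k|/r_{j_k} \to 0$, and combined with $r_{j_k + 1} < |z^k|$ this forces $r_{j_k + 1}/r_{j_k} \to 0$ along the chosen subsequence.

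The principal difficulty is closing the contradiction when $\{r_j\}$ has arbitrarily large ratio gaps, because the annuli $r_j K$ then fail to cover a full neighborhood of the origin. I would complete the argument by rescaling at the intermediate scale $\rho_k := |z^k|$: a subsequential $C_{\text{loc}}^{1,\alpha}$ limit $v_0$ of $v_k(x) := u(\rho_k x)/\rho_k^2$ satisfies the blow-up equation \eqref{m2}, and the limit point $\tilde y := \lim z^k/\rho_k \in \mathbb{S}^1 \cap \{x_2 \ge 0\}$ inherits $v_0(\tilde y) = 0$ and $\nabla v_0(\tilde y) = 0$. Applying Theorem \ref{alt} to $u$---the hypothesis places $u$ in case (ii), so $M > 0$---then supplies the global bound $|\partial_{x_1} v_0| \le M x_2$ on $\mathbb{R}_+^2$, and a boundary-Harnack argument analogous to Case 4 of the proof of Theorem \ref{alt} should rule out such a degenerate zero of $v_0$ on $\mathbb{S}^1$, yielding the desired contradiction.
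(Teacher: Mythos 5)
Your opening moves are correct: $\nabla u_0=(ax_2,\,ax_1+2bx_2)$ vanishes only at the origin when $a\neq 0$, the $C^{1,\alpha}_{\mathrm{loc}}$ convergence then pushes a uniform gradient lower bound onto $u_j$ on any compact $K\subset \overline{\mathbb{R}_+^2}\setminus\{0\}$, and this indeed empties $\intr\{u_j=0\}$ (hence its boundary, after shrinking $K$ slightly) on $r_jK$. You also correctly diagnose the obstruction: this only excludes $\Gamma_i$ from the rescaled annuli $r_j K$, which need not cover a punctured neighborhood of $0$ when the ratios $r_{j+1}/r_j$ degenerate — and you show, via the maximal-$j_k$ selection, that they necessarily do degenerate if the contradiction sequence $z^k$ exists. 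Up to this point everything is sound.

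The problem is the closing step. Rescaling at the intermediate scale $\rho_k=|z^k|$ produces some subsequential blow-up $v_0$ with a degenerate zero at $\tilde y\in\mathbb{S}^1\cap\{x_2\ge 0\}$, but \emph{nothing about Theorem \ref{alt} or the estimate $|\partial_{x_1}v_0|\le Mx_2$ forbids this}. Case 4 of the proof of Theorem \ref{alt} is tailored to a point $y$ on the \emph{fixed boundary} that realizes the supremum $M$ along the specific sequence defining $M$; the contradiction there is that the Harnack-type lower bound on the comparison function forces $\limsup v/x_n<M$, conflicting with the choice of that sequence. Your point $\tilde y$ carries no such extremal information — it is a rescaled free-boundary point, not a point realizing $M$ — so the same mechanism does not fire. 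Nor does the $C^{1,\alpha}$ convergence guarantee that $\tilde y\in\Gamma_i(v_0)$ (boundary points of the interior zero set are not stable under such convergence), so you cannot transfer structure from $\Gamma_i(v_k)$ to $v_0$ either. As written, the final paragraph is a hope, not a proof.

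The paper avoids the patching problem altogether by producing a \emph{single} fixed scale $r_j$ at which the entire ball $B_\delta^+$ (not just an annulus) is free of $\Gamma_i(u_j)$. The mechanism is different from a gradient lower bound: one shows $v_j:=\partial_1 u_j\ge 0$ on $\partial B_1^+$ (using a $C^{2,\alpha}$ boundary estimate near $\partial B_1\cap\{x_2=0\}$ together with $C^{1,\alpha}$ convergence away from the fixed boundary), then the maximum principle gives $v_j>0$ throughout $B_1^+\cap\Omega(u_j)$. This strict directional monotonicity is then exploited by a sliding-ball argument in the $e_1$-direction: any putative island of $\intr\{u_j=0\}$ inside $B_\delta^+$ can be translated out of the ball, where it must intersect $\{u_j<0\}$, and translating it back to the first contact with $\{u_j=0\}$ contradicts the positivity of $v_j$ (with Hopf's lemma handling the degenerate contact case). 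Because this yields $\Gamma_i(u_j)\cap B_\delta^+=\emptyset$ for one large $j$, rescaling back gives $\Gamma_i\cap B_{r_j\delta}^+=\emptyset$, and there is nothing left to patch. If you want to complete your argument along your own lines, you would need to replace the annulus lemma with a statement that covers a full half-ball at a fixed scale — which is exactly the role the sliding-ball step plays.
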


\begin{proof}
We may assume $a>0$. Set $v_j:= \partial_1 u_j$ and let $R>2$, $\mu \in (0,\frac{1}{4})$, and $\delta \in (0,\frac{1}{4})$. Then select $j_0=j_0(R, \mu, \delta)>0$ such that for all $j \ge j_0$ 

\begin{equation} \label{c1}
|\nabla u_j(x)| >0, \hskip .1in x \in B_R^+ \setminus B_\delta^+;
\end{equation}

\begin{equation} \label{c2}
v_j(x) > 0, \hskip .1in x \in B_R^+ \cap \{x_2 \ge \mu\}.
\end{equation}
(the two-dimensional setting is crucial for \eqref{c1}). Consider $z \in \partial B_1 \cap \{x_2=0\}$ and note that $$B_{\frac{3}{4}}^+(z) \subset B_R^+ \setminus B_\delta^+.$$Thanks to \eqref{c1}, $u_j$ satisfies $F(D^2 u_j)=1$ in $B_{\frac{3}{4}}(z)^+$ for all $j \ge j_0$. $C^{2,\alpha}$ estimates up to the boundary (see Theorem \ref{lem:C2alpha}) implies 
\[
\sup_j \|u_j\|_{C^{2,\alpha}\big(B_{\frac{3}{4}}^+(z)\big)} < \infty.
\] 
Thus, along a subsequence, $v_j \rightarrow ax_2$ in $C^{0,1}$ ($C^{2,\alpha}$ is compactly contained in $C^{1,1}$) and so   

\[
c_j:= \sup_{\begin{subarray}{c}x,y\in B_{3/4}^+(z)\\ x\ne y\end{subarray}} \frac{|(v_j(x)-v_j(y))-(v(x)-v(y))|}{|x-y|} \rightarrow 0.
\]
In particular, since $v_j(x_1,0)=v(x_1,0)=0$, it follows that $$\frac{|v_j(x)-ax_2|}{x_2} \le c_j$$ and so 

\[
v_j(x) \ge (a-c_j)x_2.
\]
Now we select $j$ large such that $v_j(x) \ge 0$ on $\partial B_1$.    
Note that $Lv_j=0$ in $B_{1}^{+}\cap\Omega(u_{j})$ where $L$ is an elliptic second order operator obtained by differentiating \eqref{me}.
Indeed, $u_j$ satisfies 
\begin{equation}
\begin{cases}
F(D^{2}u_j)=1 & \text{a.e. in }B_{1/r_j}^{+}\cap\Omega(u_j),\\
|D^{2}u|\le K & \text{a.e. in }B_{1/r_j}^{+}\backslash\Omega(u_j),\\
u_j=0 & \text{on }B'_{1/r_j},
\end{cases}\label{eq:main2}
\end{equation}
where $\Omega(u_j)$ is the dilated set $\Omega/r_j$, and without loss we may assume $r_j<\frac{1}{2}$. Since $v_j$ vanishes on $\partial \Omega(u_{j})$ and is non-negative on $\partial B_1^+$, the maximum principle implies 
$v_j>0$ in $B_{1}^{+} \cap \Omega(u_{j})$ (note that $v_j$ is not identically zero by \eqref{c2}). If $\Gamma_i(u_j) \cap B_\delta^+ \neq \emptyset$, consider a ball $N$ in the interior of $\{u_j=0\} \cap B_{\delta}^{+}$. For $t \in \mathbb{R}$, let $N_{t} = N + te_1$. Note that by taking $t$ negative we can move $N_t$ to the left so that eventually $N_{t} \subset B_{1}^{+}\backslash B_{\delta}^{+}$. Consider the strip $S=\cup_{t\in \mathbb{R}} N_t$. The next claim is that  there exists a ball in the set $(S \cap B_1^+) \setminus B_\delta^+$ such that $u_j \neq 0$ in this ball: if not, then for each point $z \in (S \cap B_1^+) \setminus B_\delta^+$ there exists a sequence $\{z_k\} \subset \{u_j=0\}$ such that $z_k \rightarrow z$; by continuity, $u_j(z)=0$, so $u_j=0$ in $(S \cap B_1^+) \setminus B_\delta^+$ and therefore the gradient also vanishes there, a contradiction to \eqref{c1}. Denote the ball by $N_{\tilde t} \subset \Omega(u_j)$ and note that $u_j <0$ on $N_{\tilde t}$ since for each $z \in N_{\tilde t}$, there exists $t_z>0$ such that $z+e_1t_z \in \{u_j=0\}$ and $v_j > 0$ in $B_1^+ \cap \Omega(u_j)$. Thus, $N_{\tilde t} \subset \Omega(u_j) \cap \{u_j<0\}$. Now move $N_{\tilde t}$ to the right until the first time it touches $\{u_j=0\}$, and let $y$ be the contact point. If $\nabla u_j(y) =0$, we immediately obtain a contradiction via Hopf's lemma. Thus we may assume $\nabla u_j(y) \neq 0$ which implies $y \in \Omega(u_j)$; whence $v_j(y)>0$ (recall that $v_j>0$ in $\Omega(u_{j})$). By continuity $v_j>0$ in $B_r(y)$ for some $r>0$ so in particular $v_j(y+te_1)>0$ for all $t>0$ small. Since $\{y+te_1: t \in (0, r)\} \subset \Omega(u_j)$, $t_*:=\sup \{t>0: y+te_1 \in \Omega(u_{j})\}$ is positive. Note that $y+te_1$ will eventually enter $N$ as $t$ gets larger. However, $$u_j(y+t_*e_1)- u_j(y)=\int_{0}^{t_*} v_j(y+s e_1) ds >0,$$ and this implies $0=u_j(y+t_*e_1)> u_j(y)=0$, a contradiction. Thus $\Gamma_i(u_j) \cap B_\delta^+ = \emptyset$ and the result follows.              
\end{proof}

\noindent Before proving uniqueness of blowups and tangential touch, we require one more lemma. 

\begin{lem}\label{lem:intempty}
Let $u$ be a solution to \eqref{me} with $\Omega = \big(\{u \ne 0\} \cup \{\nabla u \neq 0\} \big) \cap \{x_n>0\}$. If $s \in (0,1]$ and $(B_s^+ \setminus \Omega)^\circ = \emptyset$, then $|B_s^+ \setminus \Omega|=0$.  
\end{lem}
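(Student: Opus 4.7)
The plan is to argue by contradiction. Set $E := B_s^+ \setminus \Omega$ and assume $|E| > 0$. Since $E \subseteq \{\nabla u = 0\}$ and $u \in W^{2,p}_{\mathrm{loc}}$ for every $p < \infty$, Stampacchia's theorem applied to each $\partial_i u$ yields $D^2 u = 0$ a.e.\ on $E$. Moreover $u \in C^{1,1}$, so $\nabla u$ is Lipschitz and by Rademacher's theorem $u$ admits a second-order Taylor expansion at a.e.\ point. Hence I can pick an interior point $x_0 \in B_s^+$ that is simultaneously a Lebesgue density point of $E$ and a point of twice classical differentiability with $D^2 u(x_0) = 0$. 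Together with $u(x_0) = \nabla u(x_0) = 0$, Taylor's formula then forces
\[
\sup_{B_r(x_0)} |u| = o(r^2) \quad \text{as } r \to 0^+.
\]

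To contradict this I would prove quadratic non-degeneracy. For $y \in \Omega$ with $u(y) > 0$ and $B_r(y) \Subset B_1^+$, define $\phi(x) := u(x) - \frac{|x-y|^2}{2n\lambda_1}$. From \hyperref[hyp:uniformellipticity]{(H2)} and $F(0) = 0$ one computes
\[
F(D^2 u) - F(D^2 \phi) \le \mathcal{P}^+\!\left(\tfrac{1}{n\lambda_1} I\right) = 1,
\]
so that $\mathcal{P}^+(D^2 \phi) \ge F(D^2 \phi) \ge 0$ a.e.\ in $\Omega$. Continuity of $u$ and $\nabla u$ together with $\Omega = (\{u \ne 0\}\cup\{\nabla u \ne 0\})\cap\{x_n>0\}$ gives $\partial \Omega \cap \{x_n > 0\} \subseteq \{u = 0\} \cap \{\nabla u = 0\}$, hence $\phi \le 0$ on $\partial \Omega \cap B_r(y)$. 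The ABP/strong maximum principle for $\phi$ on the component of $\Omega \cap B_r(y)$ containing $y$ then forces
\[
u(y) = \phi(y) \le \max_{\partial B_r(y)} u - \tfrac{r^2}{2n\lambda_1},
\]
and a symmetric barrier handles $u(y) < 0$.

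Finally, any $y \in \Omega$ has $u(y) \ne 0$ or $\nabla u(y) \ne 0$; in the second case the implicit function theorem produces nearby points of both signs, so $\Omega \subseteq \overline{\{u>0\}} \cup \overline{\{u<0\}}$. Since $(B_s^+ \setminus \Omega)^\circ = \emptyset$ forces $x_0 \in \overline{\Omega}$, taking closures gives $x_0 \in \overline{\{u>0\}} \cup \overline{\{u<0\}}$. Selecting a sequence $y_k \in \Omega$ with $u(y_k)$ of fixed sign converging to $x_0$ and passing to the limit in the non-degeneracy estimate yields $\sup_{B_r(x_0)} |u| \ge \frac{r^2}{2n\lambda_1}$ for all sufficiently small $r$, contradicting the Taylor decay. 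I expect the non-degeneracy step to be the main obstacle: the free boundary $\partial \Omega$ is \emph{a priori} irregular, so the maximum principle must be justified via a strong $W^{2,n}$ (ABP) argument rather than classical comparison, with the simultaneous vanishing of $u$ and $\nabla u$ on $\partial \Omega$ providing the crucial boundary control on $\phi$.
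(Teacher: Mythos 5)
Your overall strategy is the same as the paper's: find a point $x_0$ in $B_s^+\setminus\Omega$ at which $u$, $\nabla u$, and $D^2u$ all vanish (the latter in the a.e./twice-differentiability sense), obtain $\sup_{B_r(x_0)}|u|=o(r^2)$, and contradict this with quadratic non-degeneracy using that $(B_s^+\setminus\Omega)^\circ=\emptyset$ forces $x_0\in\overline{\Omega}$. The paper cites this non-degeneracy as Lemma~3.1 of \cite{IM}; you attempt to re-derive it from scratch. Your version for $u(y)>0$ is correct, and you are also more explicit than the paper about choosing $x_0$ to be a point of second-order classical differentiability.

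The gap is in your claim that ``a symmetric barrier handles $u(y)<0$.'' The source term $F(D^2u)=1$ has a fixed sign, which makes this non-degeneracy genuinely one-sided. Concretely, for $\psi(x):=u(x)+\tfrac{|x-y|^2}{2n\lambda_1}$ one has, using \hyperref[hyp:uniformellipticity]{(H2)} and \hyperref[hyp:0@0]{(H1)},
\[
\mathcal{P}^+(D^2\psi)\ \ge\ F(D^2\psi)\ \ge\ F(D^2u)+\mathcal{P}^-\!\left(\tfrac{1}{n\lambda_1}I\right)\ =\ 1+\tfrac{\lambda_0}{\lambda_1}\ >\ 0,
\]
so $\psi$ is again a \emph{subsolution} of the Pucci maximal operator, not a supersolution, and no minimum principle is available. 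Replacing $\lambda_1$ by $\lambda_0$ (to exploit $\mathcal{P}^+(-cI)\le-1$) does make $u-\tfrac{|x-y|^2}{2n\lambda_0}$ a supersolution, but the resulting one-sided inequality runs in the wrong direction and yields no lower bound on $|u|$. Consequently, if $x_0\in\overline{\{u<0\}}\setminus\overline{\{u>0\}}$ your argument does not produce the required growth, and you would need a separate argument ruling out that configuration (e.g.\ a Hopf-type argument excluding ``negative lakes'' adjacent to $x_0$), or you should simply invoke the one-sided non-degeneracy estimate of \cite{IM} at $x_0\in\overline{\{u\neq0\}}$, as the paper does, rather than deriving it. The Lebesgue density-point condition you impose on $x_0$ is harmless but unnecessary: the hypothesis $(B_s^+\setminus\Omega)^\circ=\emptyset$ already puts every point of $B_s^+$ in $\overline{\Omega}$.
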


\begin{proof}
Since $u \in W^{2,n}(B_1^+)$, it follows that $D^2 u = 0$ a.e. on $B_s^+ \setminus \Omega$. Let $Z:=\{D^2 u =0\} \cap (B_s^+ \setminus \Omega)$ and note that $|Z|=|B_s^+ \setminus \Omega|$. Thus if $Z \subset (B_s^+ \setminus \Omega)^\circ$, then the result follows. Let $x^0 \in Z$ and suppose $x^0 \notin (B_s^+ \setminus \Omega)^\circ$. Then consider a sequence of points $x^j \rightarrow x^0$ such that $u(x^j) \neq 0$ and let $r_j:=|x^0-x^j|$. Non-degeneracy (see e.g. Lemma 3.1 in \cite{IM}) implies that for $j$ large, $$\displaystyle \sup_{\partial B_{r_j}(x^0)} \frac{u}{r_j^2} \ge c > 0,$$ or in other words $$\displaystyle \sup_{\partial B_{1}(0)} \frac{u(x^0+r_jx)}{r_j^2} \ge c>0.$$ Now for each $j$ large enough, let $y^j \in \partial B_1(0)$ be the element achieving the supremum in the previous expression; note that since $u(x^0)=|\nabla u(x^0)|=|D^2u(x^0)|=0$, we have $$u(x^0+r_jy^j) = o(r_j^2),$$ a contradiction.        
\end{proof}

Theorem \ref{alt}, Lemma \ref{lem:uniquenessofblowups2D}, and Lemma \ref{lem:intempty} imply uniqueness of blow-ups in two dimensions.

\begin{thm}[Uniqueness of Blow-ups]\label{lem:uniquenessofblowups2Dexplicit}
Let $u$ be a solution to \eqref{me}
with $\Omega = \big(\{u \ne 0\} \cup \{\nabla u \neq 0\} \big) \cap \{x_2>0\}\subset \mathbb{R}_+^2$. If $0 \in \overline{\{u \neq 0\}}$ and $\nabla u(0)=0$, then all blow-up limits $u_0$ of $u$ at the origin are of the form 
\[
u_0(x)=ax_1x_2+bx_2^2
\]
where $a, b\in \mathbb{R}$ with at least one of them non-zero. 
\end{thm}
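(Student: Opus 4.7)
The plan is to reduce the theorem to the content of Lemmas \ref{lem:uniquenessofblowups2D} and \ref{lem:intempty} by first splitting via the dichotomy of Theorem \ref{alt}. In case \hyperref[alt(i)]{(i)}, every blow-up has the form $b x_2^2$ with $b > 0$, which is the desired form with $a = 0$, so nothing further is needed. The nontrivial case is \hyperref[alt(ii)]{(ii)}: there exists \emph{some} blow-up of the form $a x_1 x_2 + b x_2^2$ with $a \neq 0$, and the task is to show that \emph{every} blow-up along \emph{every} sequence $r_j \to 0^+$ is also a polynomial of this shape. The key is that Lemma \ref{lem:uniquenessofblowups2D} produces $\delta \in (0,1)$ with $B_\delta^+ \cap \Gamma_i = \emptyset$, and the strategy is to upgrade this geometric fact to the analytic conclusion $F(D^2 u) = 1$ a.e. in $B_\delta^+$, after which any blow-up automatically solves the fully nonlinear equation on all of $\mathbb{R}_+^2$ with zero Dirichlet data on $\{x_2 = 0\}$.

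For the upgrade, I would observe that $\intr\{u=0\} \cap B_\delta^+$ is open in $B_\delta^+$ and, since its topological boundary lies in $\Gamma_i \cup \{x_2 = 0\}$ and hence misses the open half-ball $B_\delta^+$, also relatively closed in $B_\delta^+$. By connectedness this set is either empty or all of $B_\delta^+$; the latter contradicts $0 \in \overline{\{u \neq 0\}}$, so it is empty. Then $(B_\delta^+ \setminus \Omega)^\circ \subseteq \intr\{u = 0\} \cap B_\delta^+ = \emptyset$, using $B_\delta^+ \setminus \Omega \subseteq \{u = 0\}$, and Lemma \ref{lem:intempty} yields $|B_\delta^+ \setminus \Omega| = 0$. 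Combined with $F(D^2 u) = 1$ on $\Omega$ this gives the equation almost everywhere in the full half-ball $B_\delta^+$.

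With this in hand, for any sequence $r_j \to 0^+$ the rescalings $u_j(x) = u(r_j x)/r_j^2$ are uniformly bounded in $C^{1,1}$ by Theorem \ref{thm:C11} and solve $F(D^2 u_j) = 1$ a.e. in $B_{\delta/r_j}^+$ with $u_j = 0$ on the hyperplane. A subsequential $C^{1,\alpha}_{\text{loc}}$ limit $u_0$ then satisfies $F(D^2 u_0) = 1$ in $\mathbb{R}_+^2$ with zero boundary data; applying Theorem \ref{lem:C2alpha} to $u_0(Rx)/R^2$ and letting $R \to \infty$ (exactly as in Case 1 of the proof of Theorem \ref{alt}) forces $D^2 u_0$ to be constant, so $u_0$ is quadratic, and the boundary condition pins it down to the form $a x_1 x_2 + b x_2^2$. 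Non-degeneracy, preserved under the scaling via $0 \in \overline{\{u \neq 0\}}$, excludes $u_0 \equiv 0$, so $(a,b) \neq (0,0)$. The most delicate point is the topological step in the second paragraph, where the hypothesis that $\Gamma_i$ is defined through $\intr\{u=0\}$ (rather than $\{u=0\}$) is precisely what allows the open-and-closed argument to go through; everything else is routine bookkeeping built on the three preceding results.
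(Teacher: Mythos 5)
Your proof is correct and follows the same logical skeleton as the paper's, differing mainly in bookkeeping and in the final step. The paper splits on whether $0 \in \overline{\Gamma}_i$ or not: if $0 \in \overline{\Gamma}_i$, Lemma \ref{lem:uniquenessofblowups2D} read contrapositively rules out alternative \hyperref[alt(ii)]{(ii)} of Theorem \ref{alt}, so \hyperref[alt(i)]{(i)} holds; if $0 \notin \overline{\Gamma}_i$, the argument is essentially yours. You instead split directly on the dichotomy of Theorem \ref{alt}, which is logically equivalent but a touch cleaner since in alternative \hyperref[alt(ii)]{(ii)} the hypothesis of Lemma \ref{lem:uniquenessofblowups2D} is handed to you without a contrapositive detour. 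Your clopen/connectedness argument in $B_\delta^+$ spells out what the paper asserts more tersely (``it follows that $\{u=0\}^\circ \cap B_\delta^+ = \emptyset$'') and is exactly the right justification; the remark that this hinges on $\Gamma_i$ being defined via $\intr\{u=0\}$ is well taken. The one genuine divergence is at the end: once $F(D^2u)=1$ a.e.\ in $B_\delta^+$, the paper invokes Theorem \ref{lem:C2alpha} on $u$ itself to get $u \in C^{2,\alpha}(B_{\delta/2}^+)$, whence the blow-up along any sequence $r \to 0$ is the second-order Taylor polynomial $\langle x, D^2u(0)x\rangle$, with $(a,b)\neq(0,0)$ because $F(D^2u(0))=1$ and \hyperref[hyp:0@0]{(H1)} forces $D^2u(0)\neq 0$. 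This is shorter than your route of rescaling to a global solution and running the $R\to\infty$ Liouville argument from Case~1 of Theorem \ref{alt}, and it additionally gives uniqueness of the blow-up limit (not merely that each subsequential limit is a quadratic), though the weaker conclusion you obtain is all that Theorem \ref{tt} needs. Both arguments are valid.
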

\begin{proof}
We divide the proof into two cases.
\begin{description}[leftmargin=0pt]
\item[Case 1, $\bf{0\in \overline{\Gamma}_i}$:]
	Lemma \ref{lem:uniquenessofblowups2D} implies the non-existence of a blow-up $u_0$ of $u$ of the form
	\[
	ax_1x_2 + b x_2^2,
	\]
	$a \neq 0$, $b \in \mathbb{R}$ from which it follows that \hyperref[alt(i)]{(i)} holds in Theorem \ref{alt}.
\item[Case 2, $\bf{0\not\in \overline{\Gamma}_i}$:]
	In this case there exists $\delta>0$ such that $\Gamma_i \cap B_\delta^+ = \emptyset.$ Since $0 \in \overline{\{u \neq 0\}}$ (by assumption), it follows that $B_\delta^+ \not\subset \{u=0\}^\circ$ and as $\Gamma_i \cap B_\delta^+ = \emptyset,$ we may conclude that $\{u=0\}^\circ \cap B_\delta^+ = \emptyset$. Thus the hypotheses of Lemma \ref{lem:intempty} are satisfied and by applying the lemma we obtain that $F(D^2 u)=1$ a.e. in $B_\delta^+$.
	Therefore $u \in C^{2,\alpha}(B_{\delta/2}^+)$ and the blow-up limit $u_0$ is uniquely given by
	\begin{align}
	\lim_{r\to 0} \frac{u(rx)}{r^2} &= \lim_{r\to 0}\frac{u(0)+\nabla u(0)\cdot rx+\langle rx,D^2u(0) rx \rangle + o(r^2)}{r^2}\\
			&= \langle x,D^2u(0) x \rangle= ax_1x_2+bx_2^2.
	\end{align}
	The last equality follows from the boundary condition. Furthermore, $u_0$ solves the same equation as $u$ so $F(D^2u_0)=F(D^2u(0))=1$
	and so $a$ and $b$ cannot both be zero due to \hyperref[hyp:0@0]{(H1)}.
\end{description}
\end{proof}

If blow-ups are unique and of the form given above, it is rather standard to show that the free boundary touches the fixed boundary tangentially (see e.g. Chapter 8 in \cite{MR2962060}). The proof is included for completeness.

\begin{thm}[Tangential Touch] \label{tt}
Let $u$ be a solution to \eqref{me} with $\Omega = \big(\{u \ne 0\} \cup \{\nabla u \neq 0\} \big) \cap \{x_2>0\} \subset \mathbb{R}_+^2$. Then there exists a constant $r_0>0$ and a modulus of continuity $\omega_u(r)$ such that 
\[
\Gamma(u) \cap B_{r_0}^+ \subset \{x: x_2 \le \omega_u(|x|)|x|\},
\]
if $0\in \overline{\Gamma(u)}$, where $\Gamma(u) :=\partial \Omega \cap \mathbb{R}_+^2$.
\end{thm}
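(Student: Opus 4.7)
My plan is to argue by contradiction in the classical way (cf.~Chapter~8 of \cite{MR2962060}), treating Theorem~\ref{lem:uniquenessofblowups2Dexplicit} as a black box. Assume no such modulus exists at the origin. Then, unwinding the definition of a modulus of continuity, there exist $\epsilon_0>0$ and a sequence $\{x^k\}\subset \Gamma(u)$ with $x^k\to 0$ and $x_2^k\ge \epsilon_0 |x^k|$ for every $k$. Set $r_k:=|x^k|\to 0^+$ and introduce the usual rescalings
\[
u_k(x):=\frac{u(r_k x)}{r_k^2}.
\]
By the $C^{1,1}$ regularity up to the fixed boundary (Theorem~\ref{thm:C11}), the sequence $\{u_k\}$ is locally uniformly bounded in $C^{1,1}$, so by Arzel\`a--Ascoli a subsequence converges in $C^{1,\alpha}_{\text{loc}}(\mathbb{R}_+^2)$ for every $\alpha\in[0,1)$ to some blow-up $u_0$.

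The hypotheses of Theorem~\ref{lem:uniquenessofblowups2Dexplicit} are readily verified: $u(\cdot,0)\equiv 0$ forces $\partial_{x_1}u(0)=0$, and if $\partial_{x_2}u(0)\ne 0$ then $\{\nabla u\ne 0\}\supset B_r^+$ for some $r>0$, contradicting $0\in\overline{\Gamma(u)}$; likewise, $0\in\overline{\Gamma(u)}\subset \overline{\Omega}$ together with the implicit function theorem on $\{u=0,\,\nabla u\ne 0\}$ gives $0\in\overline{\{u\neq 0\}}$. Hence Theorem~\ref{lem:uniquenessofblowups2Dexplicit} forces
\[
u_0(x) = a x_1 x_2 + b x_2^2, \qquad (a,b)\neq(0,0).
\]

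Now I look at where the contact points go. Setting $y^k:=x^k/r_k\in\mathbb{S}^1$ we have $y_2^k\ge \epsilon_0$, so after extracting a further subsequence $y^k\to y$ with $y_2\ge \epsilon_0>0$. Scaling sends $\Gamma(u)$ to $\Gamma(u_k)$, so $y^k\in\Gamma(u_k)\cap \mathbb{R}_+^2$; since $\Omega(u_k)$ is open this means $y^k\notin\Omega(u_k)$, i.e.\ $u_k(y^k)=0$ and $\nabla u_k(y^k)=0$. The $C^{1,\alpha}_{\text{loc}}$ convergence is genuine on a neighborhood of $y$ (which sits at positive height since $y_2\ge \epsilon_0$), and passing to the limit yields
\[
u_0(y)=0, \qquad \nabla u_0(y) = (a y_2,\; a y_1 + 2 b y_2) = 0.
\]
If $a\ne 0$ the first component gives $y_2=0$; if $a=0$ and $b\ne 0$ the second component gives $y_2=0$. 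In either case $y_2=0$, contradicting $y_2\ge \epsilon_0>0$.

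The conceptual difficulty has already been absorbed by Theorem~\ref{lem:uniquenessofblowups2Dexplicit}; once blow-ups are unique and of the explicit polynomial form above, the contradiction is purely algebraic. The only minor step requiring care is the verification that the hypotheses $0\in\overline{\{u\ne 0\}}$ and $\nabla u(0)=0$ of that theorem transfer from the assumption $0\in\overline{\Gamma(u)}$, but this is immediate from continuity of $\nabla u$, the boundary condition, and the openness of $\Omega$.
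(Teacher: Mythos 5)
Your argument is correct and follows essentially the same strategy as the paper's proof: argue by contradiction, rescale along the bad sequence $x^k$ to get $y^k = x^k/r_k \to y$ with $y_2 \ge \epsilon_0$, pass the $C^{1,\alpha}_{\text{loc}}$ limit through Theorem~\ref{lem:uniquenessofblowups2Dexplicit} to force $u_0 = ax_1x_2 + bx_2^2$, and observe that no point with $x_2>0$ can be a free-boundary point of such a polynomial. You are in fact a bit more explicit than the paper in two helpful places: you verify that $0\in\overline{\Gamma(u)}$ implies the hypotheses $\nabla u(0)=0$ and $0\in\overline{\{u\ne 0\}}$ of Theorem~\ref{lem:uniquenessofblowups2Dexplicit}, and you unpack the paper's terse line ``$y \in \Gamma(u_0)$ which contradicts $\Gamma(u_0)=\emptyset$'' into the pointwise conditions $u_0(y)=0$, $\nabla u_0(y)=0$ and the resulting algebra forcing $y_2=0$.
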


\begin{proof}
By Theorem \ref{lem:uniquenessofblowups2Dexplicit} the blow-up of $u$ at the origin is not identically zero and given by $u_0(x)=ax_1x_2+bx_2^2$. In particular, $\Gamma(u_0)=\emptyset$. It suffices to show that for any $\epsilon>0$ there exists $\rho_\epsilon=\rho_\epsilon(u)>0$ such that 
\[
\Gamma(u) \cap B_{\rho_\epsilon}^+ \subset B_{\rho_\epsilon}^+ \setminus \mathcal{C}_\epsilon, 
\]
where $\mathcal{C}_\epsilon := \{x_2>\epsilon|x_1|\}$. Suppose not, then there exists a solution $u$ to \eqref{me} satisfying the hypotheses of the theorem and $\epsilon>0$ such that for all $k\in \mathbb{N}$ there exists 
\[
x^k \in \Gamma(u) \cap B_{\frac{1}{k}}^+ \cap \mathcal{C}_\epsilon.
\]
Let $r_k:=|x^k|$ and $y^k :=\frac{x^k}{r_k} \in \partial B_1 \cap \mathcal{C}_\epsilon.$ Note that along a subsequence $$y^k\rightarrow y \in \partial B_1 \cap \mathcal{C}_\epsilon.$$  Define $$u_k(x):=\frac{u(r_kx)}{r_k^2}$$ so that $u_k \rightarrow u_0$ in $C_{\text{loc}}^{1,\alpha}(\mathbb{R}_+^n)$ (along a subsequence). In particular $y \in \Gamma(u_0)$ which contradicts that $\Gamma(u_0)=\emptyset$.
\end{proof}


\section{\texorpdfstring{$C^{1,1}$}{C1,1} Regularity up to the Boundary}\label{sec:C11}
In this section we show BMO-estimates as well as $C^{1,1}$ regularity up to the fixed boundary of solutions to \eqref{eq:main}.  

%

\begin{thm}[$C^{1,1}$ regularity]
\label{thm:C11}Let $f\in C^{\alpha}(B_{1}^{+})$ be a given function
and $\Omega\subseteq B_{1}^{+}$ a domain such that $u:B_{1}^{+}\to\mathbb{R}$
is a $W^{2,n}$ solution of \eqref{eq:main}. Assume $F$ satisfies
\hyperref[hyp:0@0]{(H1)}-\hyperref[hyp:Freg]{(H4)}. Then there exists a constant $C$
depending on $\|u\|_{W^{2,n}(B_{1}^{+})}$,$\|f\|_{C^{\alpha}(B_{1}^{+})},$
and universal constants such that 
\[
|D^{2}u|\le C,\qquad\text{a.e. in }B_{1/2}^{+}
\]
\end{thm}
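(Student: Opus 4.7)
The plan is to follow the template of \cite{IM}, which handled the analogous interior $C^{1,1}$ estimate. As emphasized in the excerpt, once a suitable BMO bound on $D^2 u$ is established up to the flat boundary, the passage to the pointwise $L^\infty$ bound runs essentially as in the interior case. So the core new ingredient is a boundary BMO estimate for $D^2 u$.

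The first step is to prove
\[
\| D^2 u \|_{BMO(B_{1/2}^+)} \le C \bigl( \| u \|_{W^{2,n}(B_1^+)}, \| f \|_{C^\alpha(B_1^+)}, \text{universal} \bigr)
\]
by a scale-invariant dichotomy applied to each half-ball $B_r^+(x^0)$ with $x^0 \in \overline{B_{1/2}^+}$. In the \emph{thick complement} case $|B_r^+(x^0) \setminus \Omega| \ge \mu |B_r^+(x^0)|$ for a fixed $\mu \in (0,1)$, the bound $|D^2 u| \le K$ on a definite portion of the half-ball, combined with $F(D^2 u, x) = f$ on the complementary piece and the uniform ellipticity \hyperref[hyp:uniformellipticity]{(H2)}, controls the mean oscillation of $D^2 u$. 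In the \emph{thin complement} case, $u$ satisfies $F(D^2 u, x) = f$ on most of $B_r^+(x^0)$, and the up-to-boundary $C^{2,\alpha}$ estimates of Krylov/Safonov (Theorem \ref{lem:C2alpha})---which rely on \hyperref[hyp:concavity]{(H3)} and the $x$-regularity \hyperref[hyp:Freg]{(H4)}---yield decay of the mean oscillation at smaller scales. A Caffarelli-style iteration then converts the decay into the BMO bound.

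With the BMO estimate in hand, I would mirror the argument of \cite{IM}: argue by contradiction and assume a sequence of Lebesgue points $x^k \in B_{1/2}^+$ satisfies $|D^2 u(x^k)| =: L_k \to \infty$. Rescaling $u$ at $x^k$ to normalize $L_k$ produces a sequence of solutions to a rescaled free boundary problem with vanishing $x$-dependence (thanks to \hyperref[hyp:Freg]{(H4)}) and uniformly bounded BMO norms (by scale invariance of BMO). Compactness together with the $W^{2,n}$ assumption delivers a subsequential limit that solves a limiting constant-coefficient problem on $\mathbb{R}^n$ or $\mathbb{R}^n_+$ and has BMO-controlled second derivatives yet with $|D^2 u(0)|$ arbitrarily large in a Lebesgue sense; a Liouville-type classification of such global solutions supplies the contradiction.

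The main obstacle is the thin-complement case of the BMO step when $x^0 \in B_1'$ and the half-ball straddles both $\Omega$ and its complement: the up-to-boundary $C^{2,\alpha}$ estimate must be applied even though $B_r^+(x^0) \cap \Omega$ can be rough and $u$ need not literally satisfy the PDE on a full half-ball. Handling this requires localizing to a slightly smaller half-ball, exploiting the measure smallness of the complement to absorb the error coming from the set where $F(D^2 u, x) \ne f$, and matching the two estimates on the interfacial region. Once this step is secured, the remainder parallels the interior argument of \cite{IM}.
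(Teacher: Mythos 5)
Your high-level structure (boundary BMO for $D^2u$ first, then upgrade to $L^\infty$) matches the paper, but both substeps take different routes, and the first step as you describe it has a gap.

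For the BMO estimate, the paper does \emph{not} use a thick/thin complement dichotomy at all. The key observation is that $|F(D^2u(x),x)|$ is already bounded a.e. in the \emph{whole} half-ball: on $\Omega$ it equals $|f|$, and off $\Omega$ the constraint $|D^2u|\le K$ together with \hyperref[hyp:0@0]{(H1)}--\hyperref[hyp:uniformellipticity]{(H2)} gives $|F(D^2u,x)|=|F(D^2u,x)-F(0,x)|\le CK$. After rescaling $v(x)=u(x/R)$, $G(M,x)=R^{-2}F(R^2M,x/R)$ with $R$ large, this makes $|G(D^2v,x)|\le\delta$ everywhere, so a single Caffarelli--Wang approximation lemma (Lemma~\ref{lem:compactness}) applies uniformly, with no case distinction based on $|B_r^+(x^0)\setminus\Omega|$. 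Iterating (Lemma~\ref{lem:rhoapprox}) produces second-order polynomials $P_{k,x^0}$ with $F(D^2P_{k,x^0},x^0)=0$, bounded increments $|D^2P_{k,x^0}-D^2P_{k-1,x^0}|\le C_0$, and $|u-P_{k,x^0}|\le\rho^{2k}$ in $B_{\rho^k}^+(x^0)$; the $W^{2,p}$ estimate (Theorem~\ref{lem:W2p}) at each scale then gives $\fint_{B_{\rho^k/2}^+(x^0)}|D^2u-D^2P_{k,x^0}|^p\le C$. This entirely sidesteps the "main obstacle" you flag at the end: the boundary $C^{2,\alpha}$ estimate is applied only to the smooth comparison functions $v_k$ solving the frozen homogeneous problem, never to $u$ itself, so the roughness of $\Omega$ near $B_1'$ never enters. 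By contrast, your thick-complement case as stated does not close: knowing $|D^2u|\le K$ on a $\mu$-fraction of $B_r^+(x^0)$ and $F(D^2u,x)=f$ on the rest does not, by itself, control $\fint_{B_r^+(x^0)}\bigl|D^2u-(D^2u)_{B_r^+(x^0)}\bigr|$ --- you would still need some comparison/iteration mechanism, and you never say what plays that role there.

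For the passage from BMO to $C^{1,1}$, the paper follows the quantitative two-case argument of \cite{IM}: either $D^2P_{r,x^0}$ stays bounded as $r\to0$, in which case $|D^2u(x^0)|$ is directly controlled, or it blows up, in which case one shows the rescaled complement $A_r(x^0)=\frac{(B_r^+(x^0)\setminus\Omega)-x^0}{r}$ decays fast enough to still force a bound. Your compactness-and-Liouville contradiction is a genuinely different route; it could in principle work, but it leans on a ``Liouville-type classification'' of entire solutions on $\mathbb{R}^n$ and $\mathbb{R}^n_+$ with BMO-bounded Hessian that you assert without proof, and making that classification precise (especially in the half-space case with the free boundary constraint surviving the blow-up) is nontrivial and is precisely what the quantitative argument in \cite{IM} is designed to avoid.
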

There are three key tools needed to prove this theorem. The first two are $C^{2,\alpha}$ and $W^{2,p}$ estimates up to the boundary for the following classical fully nonlinear problem
\begin{equation}
\begin{cases}
F(D^{2}u,x)=f(x) & \text{a.e. in }B_{1}^{+},\\
u=0 & \text{on }B'_{1},
\end{cases}\label{eq:standardproblem}
\end{equation}
and the last involves BMO-estimates.
The $C^{2,\alpha}$ and $W^{2,p}$ estimates are well-known \cite{W2, Saf, MR2486925, MR0262675}. We have been unable to find a reference for the BMO-estimates and thus provide a proof which is an adaptation of the interior case. For convenience, we record the following estimates, see e.g. \cite[Theorem 4.3]{MR2486925} and \cite[Theorem 7.1]{Saf}.

\begin{thm}
[$W^{2,p}$ Regularity]\label{lem:W2p} Let $u$ be a  $W^{2,p}$ viscosity solution
to \eqref{eq:standardproblem} and $f\in L^{p}(B_{1}^{+})$ for $n\le p\le\infty$.
If $\beta(x^0,y)\le \beta_0$ in $B_r^+(x^0)\cap B_1^+$ for all $x^0\in B_1^+$ and $0<r\le r_0$, where $\beta_0$ and 
$r_0$ are universal constants, then $u\in W^{2,p}(B_{1/2}^{+})$ and
\[
\|u\|_{W^{2,p}(B_{1/2}^{+})}\le C(\|u\|_{L^{\infty}(B_{1}^{+})}+\|f\|_{L^{p}(B_{1}^{+})}),
\]
where $C=C(n,\lambda_{0},\lambda_{1},\bar{\alpha},\overline{C},p)>0$.
\end{thm}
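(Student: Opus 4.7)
The plan is to adapt Caffarelli's approach to $W^{2,p}$ estimates for fully nonlinear equations, as developed in the interior case, to the half-ball setting. The overall goal is a power-type decay on the distribution function of the Hessian,
\[
\bigl|\{x\in B_{1/2}^+:|D^{2}u|(x)>t\}\bigr|\le C\,t^{-p}\bigl(\|u\|_{L^{\infty}(B_{1}^{+})}+\|f\|_{L^{p}(B_{1}^{+})}\bigr)^{p},
\]
which then yields the claimed $W^{2,p}$ bound by integration in $t$. By standard rescaling and normalization I would first reduce to the case $\|u\|_{L^{\infty}(B_{1}^{+})}\le 1$ and $\|f\|_{L^{p}(B_{1}^{+})}\le\varepsilon_{0}$ for $\varepsilon_{0}$ small and universal. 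The assumption $\beta(x^{0},\cdot)\le\beta_{0}$ on $B_r^+(x^0)$ is precisely what is needed to freeze the $x$-dependence of $F$ on small scales in the approximation step.

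The core technical step would be the following approximation lemma: for each $x^{0}\in B_{1/2}^{+}$ and each scale $r\le r_{0}$, compare $u$ on $B_{r}^{+}(x^{0})$ with the unique viscosity solution $w$ of the frozen homogeneous Dirichlet problem
\begin{equation*}
\begin{cases}
F(D^{2}w,x^{0})=0 & \text{in }B_{r}^{+}(x^{0}),\\
w=u & \text{on }\partial B_{r}^{+}(x^{0}).
\end{cases}
\end{equation*}
By uniform ellipticity \hyperref[hyp:uniformellipticity]{(H2)} the difference $u-w$ lies in a Pucci class whose source term is pointwise controlled by $|f|+\beta(x^{0},\cdot)(|D^{2}u|+1)$, so the ABP maximum principle bounds $\|u-w\|_{L^{\infty}(B_{r}^{+}(x^{0}))}$ by a small multiple of $r^{2}$ (using smallness of $\beta_{0}$ and of $\|f\|_{L^{p}}$). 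On the other hand, since $F(\cdot,x^{0})$ is concave or convex by \hyperref[hyp:concavity]{(H3)}, the Krylov/Safonov boundary $C^{2,\alpha}$ theory applies to $w$: when $B_{r}^{+}(x^{0})$ meets $\{x_{n}=0\}$, the trivial boundary condition $u=0$ on $B_{1}'$ is inherited by $w$ on the flat piece, so $\|D^{2}w\|_{L^{\infty}(B_{r/2}^{+}(x^{0}))}$ is universally bounded up to the flat boundary. Together, the two estimates imply that at most points of $B_{r}^{+}(x^{0})$ the function $u$ can be touched from above and below by paraboloids of a controlled opening $M_{0}$, with an exceptional set of small measure.

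Next I would run a Calderón--Zygmund dyadic cube decomposition adapted to the half-ball. Writing $G_{M}\subset B_{1/2}^{+}$ for the set of points at which $u$ admits a two-sided touching paraboloid of opening $M$ and $A_{M}:=B_{1/2}^{+}\setminus G_{M}$, iterating the approximation lemma across dyadic scales yields a geometric decay of the form
\[
|A_{N M^{k+1}}|\le \sigma^{k+1}\bigl(|A_{M}|+\|f\|_{L^{n}(B_{1}^{+})}^{n}\bigr)
\]
for some $\sigma\in(0,1)$ and $N,M>1$ universal. Since at every Lebesgue point $|D^{2}u(x)|$ is dominated by the opening of the best touching paraboloid at $x$, this geometric decay translates directly into the desired power decay of the distribution function and thus into the $L^{p}$ bound.

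The main obstacle relative to the purely interior case is controlling the cubes that meet the flat boundary $B_{1}'$: for such cubes one cannot invoke the interior $C^{1,1}$ estimate and must instead use the boundary $C^{2,\alpha}$ estimate for the frozen homogeneous equation. This is exactly where the homogeneous Dirichlet datum $u=0$ on $B_{1}'$ enters, because it lets $w$ inherit zero data on the flat face of $\partial B_{r}^{+}(x^{0})$ and places it in the scope of Krylov's boundary regularity result. Once the boundary cubes are absorbed into the same framework as the interior ones via this observation, the geometric iteration closes as in the interior case and delivers the estimate.
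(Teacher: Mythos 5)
The paper does not supply its own proof of this theorem; it records it and cites it as a known boundary result (see the references \cite{W2, Saf, MR2486925, MR0262675} mentioned just before the statement). Your proposal follows the Caffarelli--Winter line of argument, which is precisely what those references develop for the half-ball: freeze the coefficients at a point, approximate by the homogeneous frozen Dirichlet problem, invoke the boundary $C^{2,\alpha}$ theory for the frozen equation (the zero Dirichlet datum on $B_1'$ being inherited by the approximant, as you correctly emphasize), and run a Calder\'on--Zygmund decay of paraboloid openings adapted to the half-space. So the strategy is correct and consistent with the cited sources.

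One step is stated too fast and, as written, is circular. You apply the ABP maximum principle to $u-w$ with a source ``pointwise controlled by $|f|+\beta(x^{0},\cdot)(|D^{2}u|+1)$'' and conclude $\|u-w\|_{L^\infty(B_r^+(x^0))}$ is a small multiple of $r^2$. But ABP gives a bound through $r\,\|g\|_{L^n(B_r^+(x^0))}$ for the source $g$, and $g$ contains $\beta\,|D^2u|$; $\|D^2 u\|_{L^n(B_r^+(x^0))}$ is not a priori of the needed size --- it is essentially the quantity the theorem aims to control. The standard remedy is to treat the $x$-dependence perturbatively: first establish the full $W^{2,p}$ estimate for the constant-coefficient operator $F(\cdot,x^0)$, then write $F(D^2 u,x^0)=f+\bigl[F(D^2u,x^0)-F(D^2u,x)\bigr]$ and absorb the resulting $\beta_0\|D^2u\|_{L^p}$ term through a covering/iteration argument; alternatively, carry the paraboloid-opening decay scale by scale so that at step $k$ one only uses the already-established opening bound from step $k-1$ rather than the unknown global Hessian norm. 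The smallness of $\beta_0$ is exactly what makes this absorption close, but your sketch leaves it implicit, and without it the approximation lemma does not directly follow from ABP.
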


\begin{thm}
[$C^{2,\alpha}$ Regularity]\label{lem:C2alpha} Let $u$ be a  $W^{2,n}$ viscosity solution to \eqref{eq:standardproblem}. Then if $\beta(x^0,y)\le \beta_0$ in $B_r^+(x^0)\cap B_1^+$ for all $x^0\in B_1$ and $0<r\le r_0$, where $\beta_0$ and $r_0$ are universal constants, then $u\in C^{2,\alpha}(B_{1/2}^{+})$
and
\[
\|u\|_{C^{2,\alpha}(B_{1/2}^{+})}\le C(\|u\|_{L^{\infty}(B_{1}^{+})}+\|f\|_{C^{\bar{\alpha}}(B_{1}^{+})}),
\]
where $C=C(n,\lambda_{0},\lambda_{1},\bar{\alpha},\overline{C})>0$.
\end{thm}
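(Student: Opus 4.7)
The plan is to run a Caffarelli-type perturbation argument adapted to the half-ball setting, using the boundary Evans--Krylov estimate for the frozen operator as the key building block. At each boundary point $x^0 \in B_{1/2}'$ I freeze the $x$-dependence and compare $u$ with the solution of $F(D^{2} v, x^0) = f(x^0)$, then iterate along a dyadic sequence of half-balls to extract a quadratic polynomial approximation of $u$ at $x^0$ with a $C^{2,\alpha}$ decay rate; interior points of $B_{1/2}^+$ are handled by the classical interior $C^{2,\alpha}$ theory applied on half-balls that stay away from $B_1'$.

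For the frozen half-ball problem, the concavity (or convexity) of $F(\cdot, x^0)$ from \hyperref[hyp:concavity]{(H3)} allows the application of Krylov's boundary Evans--Krylov theorem: if $v \in C(\overline{B_1^+})$ solves $F(D^{2} v, x^0) = c$ in $B_1^+$ with $v = 0$ on $B_1'$, then
\[
\|v\|_{C^{2,\alpha_0}(B_{1/2}^+)} \le C\bigl(\|v\|_{L^\infty(B_1^+)} + |c|\bigr)
\]
for some universal $\alpha_0 \in (0,1)$. The tangential second derivatives $\partial_{ee} v$ for $e \perp e_n$ are handled by differentiating the equation in $e$, which preserves the zero Dirichlet condition and reduces via odd reflection across $\{x_n=0\}$ to an interior Evans--Krylov estimate; mixed and purely normal second derivatives are then controlled via the equation together with \hyperref[hyp:uniformellipticity]{(H2)}.

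The iteration step is a standard Caffarelli approximation argument adapted to half-balls. Fix $x^0 \in B_{1/2}'$, set $r_k := \rho^k$ for a small universal $\rho$, and let $v_k$ solve $F(D^{2} v_k, x^0) = f(x^0)$ in $B_{r_k}^+(x^0)$ with $v_k = u$ on the curved portion of $\partial B_{r_k}^+(x^0)$ and $v_k = 0$ on its flat part. The difference $w_k = u - v_k$ lies in $S(\mathcal{P}^{\pm})$ with right-hand side bounded pointwise by
\[
|f(x) - f(x^0)| + \beta(x, x^0)\bigl(|D^{2} u(x)| + 1\bigr) \le \|f\|_{C^{\bar\alpha}(B_1^+)} r_k^{\bar\alpha} + \beta_0 \bigl(|D^{2} u(x)| + 1\bigr),
\]
so the boundary ABP estimate combined with Theorem \ref{lem:W2p} controls $\|w_k\|_{L^\infty(B_{r_k}^+(x^0))}$ by $\varepsilon_k r_k^2$ with $\varepsilon_k \to 0$ once $\beta_0$ is chosen small (depending on $\alpha$ and $\alpha_0$). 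Subtracting the second-order Taylor polynomial of $v_k$ at $x^0$ (which vanishes on $\{x_n=0\}$ by construction and whose size and modulus are controlled by the frozen estimate) yields an improvement-of-flatness step: there exists a quadratic $P_k$ vanishing on $\{x_n=0\}$ with $\|u - P_k\|_{L^\infty(B_{r_k}^+(x^0))} \le C r_k^{2+\alpha}$ for any $\alpha < \min(\alpha_0, \bar\alpha)$. The $P_k$ then form a Cauchy sequence in the finite-dimensional space of admissible quadratics, giving the pointwise $C^{2,\alpha}$ estimate at every $x^0 \in B_{1/2}'$, with the constants depending only on the data listed in the statement.

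The hardest piece is the boundary Evans--Krylov step: controlling tangential and mixed derivatives is routine once odd reflection is available, but extracting the $C^\alpha$ modulus for $\partial_{nn} v$ requires either Krylov's original barrier and translation argument or the more recent approach used in \cite{MR2486925, Saf}. Once that building block is in hand, the perturbation iteration is a routine half-space adaptation of the interior $C^{2,\alpha}$ theory, and the uniform smallness of $\beta$ is used exactly as in Theorem \ref{lem:W2p}.
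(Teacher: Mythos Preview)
The paper does not prove this theorem at all: it is recorded as a known result with references to \cite{Saf} and \cite{MR2486925}, and the subsequent arguments simply invoke it as a black box. So there is nothing in the paper to compare your argument against line by line.

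Your outline is the standard one used in those references --- freeze the operator at a boundary point, appeal to a boundary Evans--Krylov estimate for the constant-coefficient problem, and run a Caffarelli-type dyadic iteration in half-balls using the smallness of $\beta$ and the H\"older modulus of $f$ --- and in that sense it is correct in spirit. One imprecision worth flagging: your description of the boundary Evans--Krylov step via ``differentiating in a tangential direction $e$ and odd reflection to an interior Evans--Krylov estimate'' is not quite how the argument goes. Differentiating $F(D^{2}v,x^{0})=c$ in $e$ yields a \emph{linear} equation for $\partial_{e}v$ with merely measurable coefficients $F_{ij}(D^{2}v)$, so interior Evans--Krylov is not the right tool; moreover, there is no reason the reflected function solves anything across $\{x_{n}=0\}$. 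Krylov's actual argument exploits concavity to show that tangential second differences $v(x+he)+v(x-he)-2v(x)$ are subsolutions of the Pucci maximal equation, vanishing on the flat boundary, and combines this with a barrier/translation argument to control $\partial_{nn}v$ through the equation. You acknowledge in your last paragraph that this is the hard piece and defer to the literature, which is exactly what the paper does as well; just be aware that the reflection mechanism you describe is not the right one.
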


The next results are technical tools utilized in the proof of the BMO-estimate (i.e. Proposition \ref{lem:BMOestimate}). The first is an approximation lemma, see e.g. \cite[Lemma 1.4]{W2}.      

\begin{lem}
[Approximation]\label{lem:compactness} Let $\epsilon>0$, $u\in W^{2,p}(B_{1}^{+}(x^0))$,
and let $v$ solve
\[
\begin{cases}
F(D^{2}v,x^0)=a & \text{in }B_{1/2}^{+}(x^0),\\
v=u & \text{on }\partial B_{1/2}^{+}(x^0).
\end{cases}
\]
Then there exists $\delta>0$
and $\eta>0$ such that if
$$\beta(x,x^0):=\sup_{M}\frac{|F(M,x)-F(M,x^0)|}{|M|+1}\le\delta$$
and $|f(x)-a|\le\eta$ a.e. for $f(x):=F(D^2u(x),x)$ in $B_{1}^{+}(x^0)$, then
\[
|u-v|\le\epsilon\qquad\text{in }B_{1/2}^{+}.
\]
\end{lem}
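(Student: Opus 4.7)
The plan is to argue by contradiction via the standard compactness-and-stability technique for viscosity solutions (in the spirit of Caffarelli's perturbation method). Suppose the conclusion fails. Then there exist $\epsilon_0>0$ and sequences $\{F_k\}$ (all satisfying \hyperref[hyp:uniformellipticity]{(H2)} with the same ellipticity constants $\lambda_0,\lambda_1$), $\{a_k\}\subset\mathbb{R}$, $\{x^0_k\}$, $\{u_k\}\subset W^{2,p}$, and corresponding $\{v_k\}$, such that if $f_k(x):=F_k(D^2u_k(x),x)$ then $\beta_k(x,x^0_k)\le 1/k$ and $|f_k-a_k|\le 1/k$ a.e., yet $\|u_k-v_k\|_{L^\infty(B_{1/2}^+(x^0_k))}>\epsilon_0$. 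After translating, we may assume $x^0_k=0$, and after rescaling we may assume $\|u_k\|_{L^\infty(B_1^+)}\le 1$ (the conclusion is invariant under $u\mapsto u/\|u\|_\infty$ after adjusting $\epsilon$ and $\eta$).

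First I would extract compactness. By Krylov--Safonov type H\"older estimates for uniformly elliptic equations, valid up to the flat portion of the boundary where $u_k$ attains continuous boundary data, the families $\{u_k\}$ and $\{v_k\}$ are uniformly bounded in $C^\alpha(\overline{B_{1/2}^+})$ for some $\alpha\in(0,1)$. Arzel\`a--Ascoli then produces subsequences with $u_k\to u_\infty$ and $v_k\to v_\infty$ uniformly on $\overline{B_{1/2}^+}$, and because $v_k=u_k$ on $\partial B_{1/2}^+$, the limits match on $\partial B_{1/2}^+$.

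Next I would identify the limiting equation. Uniform ellipticity forces $\{F_k(\cdot,0)\}$ to be equicontinuous on compact subsets of $\mathcal{S}$ (and bounded there, using $F_k(0,0)=0$), so a diagonal Arzel\`a--Ascoli argument produces a uniformly elliptic operator $F_\infty$ (with the same constants) such that $F_k(\cdot,0)\to F_\infty$ locally uniformly on $\mathcal{S}$, along a further subsequence. Since $|a_k|$ is bounded (by uniform ellipticity and $|f_k|\le 1+\|u_k\|_{W^{2,p}}$), we may also extract $a_k\to a_\infty$. Together with $\beta_k(\cdot,0)\le 1/k$ and $|f_k-a_k|\le 1/k$, the standard stability of viscosity solutions under uniform convergence of operators (see \cite{MR1351007}) shows that both $u_\infty$ and $v_\infty$ are viscosity solutions of
\[
F_\infty(D^2 w)=a_\infty\quad\text{in }B_{1/2}^+, \qquad w=u_\infty\text{ on }\partial B_{1/2}^+.
\]
The comparison principle for continuous uniformly elliptic fully nonlinear operators with constant right-hand side then gives $u_\infty\equiv v_\infty$, contradicting $\|u_k-v_k\|_{L^\infty}>\epsilon_0$.

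The main obstacle I anticipate is justifying the stability of viscosity solutions \emph{up to the boundary}: one needs that uniform convergence $u_k\to u_\infty$ on the closed half-ball, together with the convergence $F_k(D^2u_k,\cdot)\to a_\infty$ encoded in the bounds above, genuinely forces $u_\infty$ to solve the limit Dirichlet problem in the viscosity sense at boundary points too. This is handled by the usual touching-test-function argument combined with the observation that the boundary is smooth (a hyperplane) and that the $u_k$ attain their boundary values continuously thanks to the global H\"older estimate. A secondary technical point is the extraction of the limit operator $F_\infty$, which reduces to a routine diagonal argument on a countable dense subset of $\mathcal{S}$ once equicontinuity from uniform ellipticity is in hand.
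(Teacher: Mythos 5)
The paper does not prove this lemma; it cites it to an external reference ([W2, Lemma 1.4]), so there is no in-paper proof to compare against. Your compactness-and-contradiction scheme is a standard and in principle valid route, but two points deserve attention. First, the normalization step "after rescaling we may assume $\|u_k\|_{L^\infty}\le 1$" does not work for a genuinely nonlinear $F$: replacing $u$ by $u/\|u\|_\infty$ forces you to replace $F(M,x)$ by $F(\|u\|_\infty M,x)/\|u\|_\infty$, which changes $\beta$ and, more seriously, shrinks the contradiction threshold $\epsilon_0$ by the factor $\|u_k\|_\infty$; if $\|u_k\|_\infty\to\infty$ the contradiction evaporates. The fix is to recognize (as the way the lemma is used in Lemma~\ref{lem:rhoapprox} makes clear) that $\delta$ and $\eta$ are allowed to depend on a fixed bound for $\|u\|_{W^{2,p}(B_1^+(x^0))}$; the contradiction sequence then carries a uniform $W^{2,p}$ bound, so Sobolev embedding gives equicontinuity of $u_k$ directly, barriers give it for $v_k$, and no rescaling is needed. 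Second, introducing a sequence $F_k$ of operators is unnecessary for the statement as written (a single $F$ is fixed); keeping $F$ fixed shortens the argument and avoids the extraction of a limiting operator.

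A more economical route, and most likely what the cited reference does, is a direct ABP estimate rather than compactness. Writing pointwise
\[
F(D^2u(x),x^0)-a=\bigl[F(D^2u(x),x^0)-F(D^2u(x),x)\bigr]+\bigl[f(x)-a\bigr],
\]
the hypotheses give $|F(D^2u(x),x^0)-a|\le\delta(|D^2u(x)|+1)+\eta$ a.e. Hence, using \hyperref[hyp:uniformellipticity]{(H2)} and the fact that $v$ solves $F(D^2v,x^0)=a$, the difference $w:=u-v$ lies in $\overline S(-g)\cap\underline S(g)$ on $B_{1/2}^+(x^0)$ with $g:=\delta(|D^2u|+1)+\eta$ and $w=0$ on $\partial B_{1/2}^+(x^0)$. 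The ABP maximum principle then yields
\[
\|u-v\|_{L^\infty(B_{1/2}^+(x^0))}\le C\|g\|_{L^n(B_{1/2}^+(x^0))}\le C\bigl(\delta(\|D^2u\|_{L^n}+1)+\eta\bigr),
\]
so choosing $\delta,\eta$ small in terms of $\epsilon$, the ellipticity constants, and $\|u\|_{W^{2,p}}$ finishes the proof. This version is quantitative, makes the dependence of $\delta,\eta$ on $u$ explicit, and sidesteps the delicate up-to-the-boundary stability argument you flag as the main obstacle.
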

\begin{lem}
\label{lem:rhoapprox}Let $u$ be a $W^{2,n}(B_1^+)$ solution to \eqref{eq:main}
such that $|u|\le1$, $\beta(x,y)$ satisfies \hyperref[hyp:Freg]{(H4)} and $|F(D^2u(x),x)|\le \delta$ a.e. in $B_{1}^{+}$ for $\delta$ as in Lemma 
\ref{lem:compactness}.
Then there exists a universal constant $\rho>0$ such that
\[
|D^2P_{k,x^0}-D^2P_{k-1,x^0}|\le C_0(n,\lambda_0,\lambda_1)
\]
and
\[
|u(x)-P_{k,x^0}(x)|\le \rho^{2k},\qquad\text{inside }B_{\min(\rho^{k},1)}^{+}(x^0),\, k\in\mathbb{N}_0,
\]
where $P_{k,x^0}$ is a second order polynomial such that $F(D^{2}P_{k,x^0},x^0)=0$
and $x^0\in B_{1/2}^{+}$. \end{lem}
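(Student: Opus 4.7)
The argument is a standard Caffarelli--Wang iterative approximation in which one inductively constructs $P_{k, x^0}$ so that $u$ is well-approximated by a polynomial whose Hessian nearly annihilates $F(\cdot, x^0)$ at scale $\rho^k$. We focus on the harder boundary case $x^0 \in B_{1/2}'$ (interior $x^0$ reduces to Wang's interior scheme). The base case $k=0$ is immediate: set $P_{0, x^0} \equiv 0$, so $F(D^2 P_0, x^0) = 0$ by \hyperref[hyp:0@0]{(H1)} and $|u - P_0| = |u| \le 1$. We maintain throughout the induction that $P_k$ vanishes on $\{x_n = 0\}$, i.e.\ $P_k(x) = x_n L_k(x)$ with $L_k$ linear, so that the boundary condition is preserved by all rescalings.

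Assume $P_k$ has been produced and define the rescaled increment
$$w(y) := \frac{u(x^0 + \rho^k y) - P_{k, x^0}(x^0 + \rho^k y)}{\rho^{2k}}, \qquad y \in B_1^+(0),$$
so that $|w| \le 1$ by the inductive hypothesis, $w = 0$ on $\{y_n = 0\}$, and
$$\tilde F_k(D^2 w, y) := F\bigl(D^2 w + D^2 P_k,\, x^0 + \rho^k y\bigr) = F\bigl(D^2 u(x^0 + \rho^k y),\, x^0 + \rho^k y\bigr),$$
whose right-hand side has modulus at most $\delta$ a.e. The frozen operator $\tilde F_k(\cdot, 0)$ inherits \hyperref[hyp:uniformellipticity]{(H2)}, \hyperref[hyp:concavity]{(H3)} from $F$ and satisfies $\tilde F_k(0, 0) = F(D^2 P_k, x^0) = 0$. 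Letting $v$ solve the Dirichlet problem $\tilde F_k(D^2 v, 0) = 0$ in $B_{1/2}^+$ with $v = w$ on $\partial B_{1/2}^+$, Lemma \ref{lem:compactness} (with $a = 0$) yields $|w - v| \le \epsilon$ in $B_{1/2}^+$ provided $\delta \le \eta$ and the $\beta$-smallness condition holds. Since $v$ vanishes on $\{y_n = 0\}$, Theorem \ref{lem:C2alpha} applied to $v$ produces a quadratic polynomial $Q$, also vanishing on $\{y_n = 0\}$, with $|D^2 Q| \le C_0$ universal and $|v(y) - Q(y)| \le C|y|^{2+\alpha}$. Choosing $\rho$ so that $C\rho^\alpha \le 1/2$ and then $\epsilon = \rho^2/2$ gives $|w - Q| \le \rho^2$ on $B_\rho^+$. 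Setting
$$P_{k+1, x^0}(x) := P_{k, x^0}(x) + \rho^{2k} Q\bigl((x - x^0)/\rho^k\bigr)$$
and unwinding the scaling yields $|u - P_{k+1}| \le \rho^{2(k+1)}$ on $B_{\rho^{k+1}}^+(x^0)$, while $F(D^2 P_{k+1}, x^0) = \tilde F_k(D^2 Q, 0) = 0$ and $|D^2 P_{k+1} - D^2 P_k| = |D^2 Q| \le C_0$, closing the induction.

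\textbf{Main obstacle.} The delicate point is that Lemma \ref{lem:compactness} must be applied at every scale $k$ to the operator $\tilde F_k$, and the relevant modulus of continuity is, via \hyperref[hyp:Freg]{(H4)},
$$\sup_{|y| \le 1}\, \sup_{M}\, \frac{|\tilde F_k(M, y) - \tilde F_k(M, 0)|}{|M| + 1} \,\le\, \bar C \,(1 + |D^2 P_k|)\, \rho^{k \bar\alpha}.$$
The only a priori bound on $|D^2 P_k|$ available from the induction is the telescoping estimate $|D^2 P_k| \le k C_0$, which grows linearly in $k$. Because $k \rho^{k \bar\alpha}$ is uniformly bounded for any fixed $\rho < 1$, one can choose $\rho$ small enough that $\bar C (1 + k C_0) \rho^{k \bar\alpha} \le \delta$ for every $k \ge 1$; the initial scale $k = 0$ requires $\bar C \le \delta$ and is absorbed by a preliminary rescaling of $u$ to a smaller half-ball $B_{r_0}^+$ with universal $r_0$. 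This is the only genuinely technical moment of an otherwise routine iteration, and it is precisely where the Hölder dependence of $F$ in $x$ supplied by \hyperref[hyp:Freg]{(H4)} plays an essential role.
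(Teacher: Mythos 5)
Your proposal is correct and follows essentially the same route as the paper: base case $P_0\equiv 0$ by \hyperref[hyp:0@0]{(H1)}, rescaling $u_k:=\rho^{-2k}\big(u(\rho^k\cdot+x^0)-P_{k,x^0}(\rho^k\cdot+x^0)\big)$, applying the approximation Lemma \ref{lem:compactness} and the boundary $C^{2,\alpha}$ estimate of Theorem \ref{lem:C2alpha}, and taking the second-order Taylor polynomial of the approximating solution to build $P_{k+1,x^0}$, with the linear telescoping bound $|D^2P_{k,x^0}|\le C_0 k$ absorbed by the factor $\rho^{\bar\alpha k}$ from \hyperref[hyp:Freg]{(H4)}. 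Your explicit remarks that $P_{k,x^0}$ is maintained to vanish on $\{x_n=0\}$ (so the boundary data for $v_k$ stays homogeneous) and that the $k=0$ smallness of $\beta$ requires a preliminary rescaling are points the paper leaves implicit (the latter is carried out in Proposition \ref{lem:BMOestimate}), and your treatment is sound.
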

\begin{proof}
For $k=0$ and $k=-1$, the statement is true for $P_{k,x^0}(x)\equiv0$ by
assumption (recall \hyperref[hyp:0@0]{(H1)}). If we assume it is true up to some $k$, define $u_{k}:=\frac{u(\rho^{k}x+x^0)-P_{k,x^0}(\rho^{k}x+x^0)}{\rho^{2k}}$ and
\[
F_{k}(M,x):=F(M+D^{2}P_{k,x^0},\rho^k x+x^0),\qquad x\in B_1\cap \{x_n>-\frac{x_n^0}{\rho^k}\}.
\]
Then $|F_{k}(D^{2}u_{k},x)|=|F((D^{2}u)(\rho^k x+x^0),\rho^k x+x^0)|\le\delta$ a.e. Also,
\begin{align*}
\beta_{k}(x,0) & =\sup_{M\in\mathcal{S}}\frac{|F_{k}(M,x)-F_{k}(M,0)|}{|M|+1}\\
 & =\sup_{M\in\mathcal{S}}\frac{|F(M+D^{2}P_{k,x^0},\rho^k x+x^0)-F(M+D^{2}P_{k,x^0},x^0)|}{|M|+1}\\
 & =\sup_{M\in\mathcal{S}}\frac{|F(M,\rho^k x+x^0)-F(M,x^0)|}{|M-D^{2}P_{k,x^0}|+1}\\
 & =\sup_{M\in\mathcal{S}}\frac{|F(M,\rho^k x+x^0)-F(M,x^0)|}{|M|+1}\frac{|M|+1}{|M-D^{2}P_{k,x^0}|+1}\\
 & \le\beta(\rho^k x+x^0,x^0)\sup_{M\in\mathcal{S}}\frac{|M|+1}{|M-D^{2}P_{k,x^0}|+1}\\
 & \le\overline{C}\rho^{\overline{\alpha}k}\sup_{M\in\mathcal{S}}\frac{|M|+1}{||M|-|D^{2}P_{k,x^0}||+1}\\
 & \le\overline{C}\rho^{\overline{\alpha}k}(|D^{2}P_{k,x^0}|+1),
\end{align*}
where the last inequality follows from a calculation of the maximum of the function $\frac{x+1}{|x-a|+1}$, $x$, $a\ge 0$.
However, from the induction hypothesis,
\[
|D^{2}P_{k,x^0}|\le \sum_{j=1}^k |D^{2}P_{j-1,x^0}-D^{2}P_{j,x^0}|\le C_0 k
\]
so
\[
	\overline{C}\rho^{\overline{\alpha}k}(|D^{2}P_{k,x^0}|+1)\le \overline{C}\rho^{\overline{\alpha}k}C_0 k\le \eta
\]
if $\rho$ is chosen small enough (depending only on universal constants) and $\eta$ as in Lemma \ref{lem:compactness}.
Thus $|v_{k}-u_{k}|\le\epsilon$
in $B_{1/2}\cap\{x:x_{n}>-\frac{x_n^0}{\rho^k}\}$ by Lemma \ref{lem:compactness}, where $v_{k}$ solves
\begin{equation}
\begin{cases}
F_{k}(D^{2}v_{k},x^0)=0 & \text{in }B_{1/2}\cap\{x:x_{n}>-\frac{x_n^0}{\rho^k}\},\\
v_{k}=u_{k} & \text{on }\partial (B_{1/2}\cap\{x:x_{n}>-\frac{x_n^0}{\rho^k}\}).
\end{cases}\label{eq:main-1}
\end{equation}
Since
\[
\|v_{k}\|_{L^{\infty}(B_{1/2}\cap\{x:x_{n}>-\frac{x_n^0}{\rho^k}\})}\le\|u_{k}\|_{L^{\infty}(B_{1/2}\cap\{x:x_{n}>-\frac{x_n^0}{\rho^k}\})}\le1
\]
by the maximum principle, Theorem \ref{lem:C2alpha} gives 
\begin{align}\label{eq:vkc2alpha}
\|v_{k}\|_{C^{2,\alpha}(B_{1/4}\cap\{x:x_{n}>-\frac{x_n^0}{\rho^k}\})}\le C_{0}.
\end{align}
Now define $\hat{P}_{k,x^0}$ as the second order Taylor expansion
of $v_{k}$ at the origin, and note that $F_k(D^{2}\hat{P}_{k,x^0},x^0)=F_k(D^{2}v_{k}(0),x^0)=0$.
Then
\[
|v_{k}-\hat{P}_{k,x^0}|\le C_{0}\rho^{2+\alpha}\qquad\text{in }B_{\rho}\cap\{x:x_{n}>-\frac{x_n^0}{\rho^k}\}
\]
for $\rho<1/4$, which gives
\[
|u_{k}-\hat{P}_{k,x^0}|\le|u_{k}-v_{k}|+|v_{k}-\hat{P}_{k,x^0}|\le\epsilon+C_{0}\rho^{2+\alpha}\qquad\text{in }B_{\rho}\cap\{x:x_{n}>-\frac{x_n^0}{\rho^k}\}.
\]
For $\rho^{\alpha}\le\frac{1}{2C_{0}}$ and $\epsilon\le\rho^{2}/2$,
we get
\[
|u_{k}-\hat{P}_{k,x^0}|\le\rho^{2}\qquad\text{in }B_{\rho}\cap\{x:x_{n}>-\frac{x_n^0}{\rho^k}\},
\]
or, in other words,
\[
|u-P_{k+1,x^0}|\le\rho^{2(k+1)}\qquad\text{in }B_{\rho^{k+1}}^+(x^0),
\]
for
\[
P_{k+1,x^0}(x):=P_{k,x^0}(x)+\rho^{2k}\hat{P}_{k,x^0}\left(\frac{x-x^0}{\rho^{k}}\right).
\]
Also, since $F_{k}(D^{2}\hat{P}_{k},x^0)=0$, we have
\[
F(D^{2}P_{k+1,x^0},x^0)=F(D^{2}P_{k,x^0}+D^{2}\hat{P}_{k},x^0)=F_{k}(D^{2}\hat{P}_{k},0)=0,
\]
and
\[
|D^2 P_{k+1,x^0}-D^2 P_{k,x^0}|=|D^2 \hat{P}_{k,x^0}| = |D^2 v_k(0)|\le C_0 ,
\]
by \eqref{eq:vkc2alpha}.
\end{proof}

\begin{prop}[BMO-estimate]\label{lem:BMOestimate}
Let $u$ be a viscosity solution to \eqref{eq:main}, and $P_{k,x^{0}}$ and $\rho$
be as in Lemma \ref{lem:rhoapprox}. Then
\[
\fint_{B_{\rho^{k}/2}^{+}(x^0)}|D^{2}u(y)-D^{2}P_{k,x^0}|^{2}\le C,\qquad x^0\in\overline{B}_{1/2}^{+}
\]
if $\rho$ is smaller than a constant which depends only on $\|u\|_{W^{2,p}(B_1)}$,
$f$ , $\overline{C}$ in \hyperref[hyp:Freg]{(H4)}, and universal constants.\end{prop}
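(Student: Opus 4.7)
The plan is to use Lemma \ref{lem:rhoapprox} as a normalization device. Fix $x^0 \in \overline{B}_{1/2}^+$ and $k \in \mathbb{N}_0$ and set
\[
w(y) := \frac{u(x^0 + \rho^k y) - P_{k,x^0}(x^0 + \rho^k y)}{\rho^{2k}},\qquad y \in D_k := B_1 \cap \{y_n > -x_n^0/\rho^k\}.
\]
Lemma \ref{lem:rhoapprox} gives $\|w\|_{L^\infty(D_k)} \le 1$. Since $D^2 w(y) = D^2 u(x^0+\rho^k y) - D^2 P_{k,x^0}$, the change of variables $x = x^0+\rho^k y$ yields
\[
\fint_{B_{\rho^k/2}^+(x^0)} |D^2 u - D^2 P_{k,x^0}|^2 \,dx = \fint_{D_k \cap B_{1/2}} |D^2 w|^2 \,dy,
\]
so it suffices to bound the right-hand side by a universal constant.

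Next identify the equation for $w$: in the rescaled set $\widetilde{\Omega}_k := ((\Omega - x^0)/\rho^k)\cap D_k$, $w$ solves $\widetilde{F}_k(D^2 w, y) = \tilde f_k(y)$ with
\[
\widetilde{F}_k(M, y) := F(M + D^2 P_{k,x^0}, x^0+\rho^k y) - F(D^2 P_{k,x^0}, x^0+\rho^k y),
\]
and $\tilde f_k(y) := f(x^0+\rho^k y) - F(D^2 P_{k,x^0}, x^0+\rho^k y)$. By construction $\widetilde{F}_k(0, y) \equiv 0$, so \hyperref[hyp:0@0]{(H1)} holds, while \hyperref[hyp:uniformellipticity]{(H2)}--\hyperref[hyp:concavity]{(H3)} are inherited directly. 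Repeating the $\beta_k$ computation from the proof of Lemma \ref{lem:rhoapprox} gives
\[
\widetilde{\beta}_k(y, z) \le 2\overline{C}(|D^2 P_{k,x^0}| + 1)\rho^{\bar\alpha k}|y - z|^{\bar\alpha},
\]
and combined with $|D^2 P_{k,x^0}| \le C_0 k$ (telescoping the second estimate of Lemma \ref{lem:rhoapprox}), the quantity $(C_0 k + 1)\rho^{\bar\alpha k}$ is uniformly bounded and arbitrarily small once $\rho$ is small. Hence both $\widetilde{\beta}_k \le \beta_0$ and $\|\tilde f_k\|_{L^\infty(D_k)} \le C$ hold uniformly in $k$ and $x^0$, provided $\rho$ is chosen small depending on $\overline{C}$, the universal constants, and the parameters entering Lemma \ref{lem:rhoapprox}. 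In $D_k \setminus \widetilde\Omega_k$ the obstacle bound $|D^2 u| \le K$ gives $|D^2 w| \le K + C_0 k$, so globally $w$ satisfies $\mathcal{P}^-(D^2 w) \le C \le \mathcal{P}^+(D^2 w)$ in the viscosity sense.

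Apply Theorem \ref{lem:W2p} with $p = 2$ to $w$ on $D_k$. When $x^0 \in B'_{1/2}$, the iterative construction of $P_{k,x^0}$ forces $P_{k,x^0}|_{\{x_n=0\}} \equiv 0$ (by induction: $u_k$ vanishes on $\{y_n=0\}$, hence so does $v_k$, and thus $\hat P_k|_{\{y_n=0\}} = 0$ because the tangential first- and second-order coefficients of a function vanishing on a hyperplane are zero), so $w$ has zero trace on the flat boundary and the theorem applies as stated. When $x^0_n \ge \rho^k$ the set $D_k$ is a full ball and the interior analog of Theorem \ref{lem:W2p} suffices, and the remaining intermediate case $0 < x^0_n < \rho^k$ reduces to the on-boundary case by replacing $x^0$ with its vertical projection onto the hyperplane and enlarging the ball by a universal factor. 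In every case one obtains
\[
\|D^2 w\|_{L^2(D_k \cap B_{1/2})}^2 \le C\bigl(\|w\|_{L^\infty(D_k)}^2 + \|\tilde f_k\|_{L^2(D_k)}^2\bigr) \le C,
\]
and the first paragraph then yields the claim. The main obstacle is verifying the structural hypotheses of Theorem \ref{lem:W2p} uniformly in $k$: the possibly linear growth $|D^2 P_{k,x^0}| \le C_0 k$ must be absorbed by the decay factor $\rho^{\bar\alpha k}$, and this dictates the smallness of $\rho$ in the statement. A secondary subtlety is that in the obstacle complement the PDE is replaced by a pointwise bound on the Hessian; the Pucci-extremal viscosity formulation accommodates this and keeps the effective right-hand side bounded uniformly in $k$.
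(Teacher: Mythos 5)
Your proposal follows essentially the same route as the paper: rescale around $x^0$ by $\rho^k$, use the approximating polynomials $P_{k,x^0}$ from Lemma~\ref{lem:rhoapprox} as normalizers, check that the rescaled operator has small oscillation $\beta_k$ (absorbing the at-most-linear growth $|D^2P_{k,x^0}|\le C_0 k$ with the decay $\rho^{\bar\alpha k}$), and apply the boundary $W^{2,p}$ estimate (Theorem~\ref{lem:W2p}). The differences in the normalization of $\widetilde F_k$ (your extra subtraction to enforce (H1)) and your explicit three-way case split on the location of $x^0$ are cosmetic: the paper's proof applies Theorem~\ref{lem:W2p} directly on $B_{1/2}\cap\{x_n>-x_n^0/\rho^k\}$ in one shot, but the underlying content is the same.

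The one substantive omission is the initial normalization. You invoke Lemma~\ref{lem:rhoapprox} to deduce $\|w\|_{L^\infty(D_k)}\le 1$, but that lemma is stated under the hypotheses $|u|\le 1$ and $|F(D^2u(x),x)|\le\delta$ a.e., which a generic solution of~\eqref{eq:main} need not satisfy (on $\Omega$ one only knows $|F(D^2u,x)|=|f|$, and on the complement one only has the $K$-bound on the Hessian). The paper handles this by first replacing $u$ by $v(x)=u(x/R)$ and $F$ by $G(M,x)=R^{-2}F(R^2M,x/R)$ with $R$ chosen large enough that $|G(D^2v,x)|\le\delta$, then applying Lemma~\ref{lem:rhoapprox} to $(v,G)$ and pulling the polynomials back. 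This preliminary dilation is also where the dependence on $\|u\|_{W^{2,p}(B_1)}$, $f$, and $K$ advertised in the statement actually enters, so skipping it both leaves a hypothesis of Lemma~\ref{lem:rhoapprox} unverified and obscures the constant's provenance. It is an easy fix, and once it is inserted your argument matches the paper's.
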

\begin{proof}
Let $x^{0}\in\overline{B}_{1/2}^{+}$ and define $v(x):=u(x/R)$ and
$G(M,x):=\frac{1}{R^{2}}F(R^{2}M,\frac{x}{R})$ for $R=R(\overline{C},f, K,\delta)$
($\overline{C}$ as in \hyperref[hyp:Freg]{(H4)}) chosen so that $|G(D^{2}v,x)|\le\delta$ in $B_R^+$ 
for $\delta$ as in Lemma \ref{lem:compactness}. Note also that $\beta_{G}(x,y):=\sup_{M\in\mathcal{S}}\frac{|G(M,x)-G(M,y)|}{|M|+1}$
satisifies \hyperref[hyp:Freg]{(H4)}. Then $v$ solves
\[
\begin{cases}
G(D^{2}v,x)=\frac{f(x/R)}{R^{2}} & \text{a.e. in }B_{R}^{+}\cap(R\Omega),\\
|D^{2}v|\le\frac{K}{R^{2}} & \text{a.e. in }B_{R}^{+}\backslash(R\Omega),\\
v=0 & \text{on }B_{R}^{'},
\end{cases}
\]
and there is a polynomial $\tilde{P}_{k,x^{0}}$ for which $G(D^{2}\tilde{P}_{k,x^{0}},Rx^{0})=0$,
and a constant $\tilde{\rho}$ such that 
\[
|v(x)-\tilde{P}_{k,x^{0}}(x)|\le\tilde{\rho}^{2k},\qquad x\in B_{\tilde{\rho}^{k}}^{+}(Rx^{0}),
\]
i.e. 
\[
|u(x)-P_{k,x^{0}}(x)|\le R^2\rho^{2k},\qquad x\in B_{\rho^{k}}^{+}(x^{0}),
\]
for $P_{k,x^{0}}(x):=\tilde{P}_{k,x^{0}}(Rx)$ and $\rho^k:=\tilde{\rho}^k/R$.
Note also that
\[
F(D^{2}P_{k,x^{0}},x^{0})=F(R^{2}D^{2}\tilde{P}_{k,x^{0}},Rx^{0}/R)=R^{2}G(D^{2}\tilde{P}_{k,x^{0}},Rx^{0})=0.
\]
In particular, for $u_{k}(x):=\frac{u(\rho^{k}x+x^{0})-P_{k,x^{0}}(\rho^{k}x+x^0)}{\rho^{2k}}$,
\[
F_{k}(M,x):=F(M+D^{2}P_{k,x^{0}},\rho^{k}x+x^{0})
\]
and $\beta_{k}$
as in the proof of Lemma \ref{lem:rhoapprox}, we have $|u_{k}|\le R^2$, $\beta_{k}(x,y)\le \eta$
and $|F_{k}(u_{k},x)|\le C$. Therefore we can apply Theorem \ref{lem:W2p} to deduce
\[
\|u_{k}\|_{W^{2,p}(B_{1/2}\cap\{x_{n}\ge-x^{0}/\rho^{k}\})}\le C,
\]
or 
\[
\fint_{B_{\rho^{k}/2}^{+}(x^{0})}|D^2u(x)-D^2P_{k,x^{0}}|^{p}\, dx\le C.
\]
\end{proof}


From this it is straightforward to show that there exists a second
order polynomial $P_{r,x^0}(x)$ with $F(D^{2}P_{r,x^0},x^0)=f(x^0)$
such that 
\begin{equation*} \label{bbmo}
\sup_{r\in(0,1/4)}\fint_{B_{r}^{+}(x^0)}|D^{2}u(y)-D^{2}P_{r,x^0}|^{2}\, dy\le C,
\end{equation*}
where $x^0\in \overline{B}_{1/2}^{+}(0)$. The proof of $C^{1,1}$ regularity now follows as in \cite{IM} up to minor modifications (see also \cite{FS}). The idea is that $D^2 P_{r,x^0}(x)$ provides a suitable approximation to $D^2 u(x^0)$ and one may consider two cases: firstly, if $D^2 P_{r,x^0}(x)$ stays bounded in $r$, then one can show that $D^2 u(x^0)$ is also bounded by a constant depending only on the initial ingredients; next, if $D^2 P_{r,x^0}(x)$ blows up in $r$, one can show that the set $$A_{r}(x^0):=\frac{(B_{r}^{+}(x^0)\backslash\Omega)-x^0}{r}=B_{1}\backslash((\Omega-x^0)/r)\cap \Big\{y:y_{n}>-\frac{x_n^0}{r} \Big \}$$ decays fast enough to ensure yet again a bound on $D^2 u(x^0)$.

\bibliographystyle{amsalpha}
\bibliography{References}



\end{document}